\documentclass[12pt]{amsart}
\usepackage[T1]{fontenc}
\usepackage[osf,sc]{mathpazo}
\usepackage{amsmath,amsthm,amsfonts,latexsym,amscd,amssymb,amsopn,enumerate,hyperref, mathrsfs}
\usepackage[utf8]{inputenc}
\usepackage{amsmath}
\usepackage{tikz-cd}
\usepackage{bm}
\usepackage{graphicx}
\usepackage{geometry}\geometry{a4paper,left=25mm, right=25mm, top=35mm, bottom=35mm}
\hypersetup{
	colorlinks=true,
	linktoc=all,
	linkcolor=blue,
	citecolor=red,
	filecolor=black,
	urlcolor=blue
}

\usepackage{geometry}\geometry{a4paper,left=30mm, right=30mm, top=35mm, bottom=35mm}
\setlength{\parindent}{4em}
\setlength{\parskip}{1em}
\usepackage[inline]{enumitem}
\makeatletter
\newcommand{\inlineitem}[1][]{%
	\ifnum\enit@type=\tw@
	{\descriptionlabel{#1}}
	\hspace{\labelsep}%
	\else
	\ifnum\enit@type=\z@
	\refstepcounter{\@listctr}\fi
	\quad\@itemlabel\hspace{\labelsep}%
	\fi} \makeatother
\parindent=0pt

\newtheorem{thm}{Theorem}[subsection]
\newtheorem{lem}[thm]{Lemma}
\newtheorem{prop}[thm]{Proposition}

\newtheorem{cor}[thm]{Corollary}

\newtheorem{example}[thm]{Example}

\makeatletter
\def\namedlabel#1#2{\begingroup
	\def\@currentlabel{#2}%
	\label{#1}\endgroup
}
\makeatother

\theoremstyle{definition}

\theoremstyle{remark}
\newtheorem{remark}[thm]{Remark}
\numberwithin{equation}{subsection}
\begin{document}
\title{Schur Index and Extensions of Witt-Berman's Theorems}
\author[R. S. Kulkarni]{Ravindra Shripad Kulkarni}
\address{Ravindra Shripad Kulkarni, Bhaskaracharya Pratishthana, 56/14, Off Law College Road, Damle Road, Pune, Maharashtra, 411004, India. \, email: {\tt punekulk@gmail.com}}
   
\author[S. S. Pradhan]{Soham Swadhin Pradhan {*}}
\address{Soham Swadhin Pradhan, School of Mathematics, Harish-Chandra Research Institute, HBNI, Chhatnag Road, Jhunsi, Allahabad, 211 019,  India. \, email: {\tt soham.spradhan@gmail.com}}

\subjclass[2010]{20Cxx}
\keywords{Group Algebras, Primitive Central Idempotents, Schur Index, Induced Representations}
\thanks{* Corresponding author.\\
E-mail addresses: punekulk@gmail.com (R. S. Kulkarni), soham.spradhan@gmail.com (S. S. Pradhan).}
%$^1$ The author expresses gratitude to Rahul Dattatraya Kitture for useful discussions. He also wishes to express thanks to both the institutes Bhaskaracharya Pratishthana, Pune-India and Harish-Chandra Research Institute (HRI), Prayagraj (Allahabad)-India for giving all the facilities to complete this work. }
\date{\sc \today}
\begin{abstract}
Let $G$ be a finite group, and $F$ a field of characteristic $0$ or prime to the order of $G$. In $1952$, Witt and in $1956$, Berman independently proved that the number of inequivalent irreducible $F$-representations of $G$ is equal to the number of $F$-conjugacy classes of the elements of $G$, where {\it ``$F$-conjugacy"} was defined in a certain way. 
In this paper, we define $F$-conjugacy on $G$ in a natural way and give a proof of the above {\it Witt-Berman theorem}. In addition, we give an explicit formula for computing a primitive central idempotent {\it (pci)} of the group algebra $F[G]$ corresponding to an irreducible $F$-representation of $G$, which can be obtained from the {\it ``$F$-character table"} of $G$.\\
\noindent 
Let $G$ be a finite group with a normal subgroup $H$ of index $p$, a prime. 
In $1955$, in case $F$ is algebraically closed, Berman computed the primitive central idempotent (pci) of $F[G]$ corresponding to an irreducible  
$F$-representation of $G$, in terms of pci's of $F[H]$. In this paper, we give a complete proof of this {\it Berman's theorem}, and extend this result when $F$ is not necessarily algebraically closed (Theorem \ref{First Theorem}). Also, using classical {\it Schur's theory} and {\it Wedderburn's theory}, we work out decomposition of induced representation of an irreducible $F$-representation of $H$, into irreducible components (Theorem \ref{Second Theorem}). In a separate paper we shall give more applications of theorems
\ref{First Theorem} and \ref{Second Theorem}.

\end{abstract}
\maketitle
%\tableofcontents
\section{Introduction}
Throughout this paper, $G$ denotes a finite group, and $F$ denotes a field of characteristic $0$ or prime to the order of $G$, unless it is explicitly stated. 

Let $H$ be a subgroup of $G$, and $\overline{F}$ the algebraic closure of $F$. Let $\Omega_G$, $\Omega_H$ be the set of all $F$-representations of $G$ and $H$ respectively. In case we know $\Omega_{H}$, then the study of $\Omega_G$ amounts to studying how the induced representations of $G$ induced from representations in $\Omega_H$ decompose into irreducible components. When $F$ is not algebraically closed, there is an additional arithmetic aspect arising from how the cyclotomic polynomials split over $F$. The additional subtlety, in this case, is that of {\it Schur index}. The account of Schur index in the literature, I. Reiner (see \cite{MR0122892}), I. M. Isaacs (see \cite{MR2270898}), gives a view {\it "from the top"}.  For the solution to our problem, we need to develop a view of the Schur index {\it "from the bottom"}.  A combination of these two views leads to a solution to the general problem. 
 
Let $F$ be a field of characteristic $p \geq 0$. An element $g \in G$ is called {\it $p$-regular} if $p = 0$ or if $p > 0$ and the order of g is not divisible by $p$. 
Let $u$ be the least common multiple of the orders of the $p$-regular elements of $G$. Let $\omega$ be a primitive $u$-th root of unity in $\overline{F}$. Let $K = \mbox{Gal}(F(\omega)/F)$, which is an abelian group. Notice that $p \nmid u$ and then we have a homomorphism $\theta$ from the Galois group $K$, into the multiplicative group $\mathbb{Z}^{*}_u$, defined as follows. If $\sigma \in K$, then $\sigma(\omega) = \omega^a$, where $a \in \mathbb{Z}^{*}_u$, and we define $\theta(\sigma) = a$. Let $A = \theta(K) \leq \mathbb{Z}^{*}_u$. Following Berman \cite{Berman}, we say that two $p$-regular elements $x, y \in G$ are conjugate in the sense of Berman, or  ``{\it Berman-conjugate}", if there exists $g \in G$ and $j \in A$ such that $g^{-1}xg = y^j$, that is, $x$ is conjugate to $y^j$. This is an equivalence relation on the set of $p$-regular elements of $G$, and so the set of $p$-regular elements of $G$ is a disjoint
union of Berman conjugacy classes of $p$-regular elements. In $1952$, Witt (see \cite{Witt}) and in $1956$ (see \cite{Berman-1952}, \cite{Berman}), Berman independently proved that {\it the number of equivalence classes of irreducible $F$-representations of $G$ equals the number of Berman conjugacy classes of $p$-regular elements of $G$.} This theorem is known as {\it Witt-Berman theorem}. The result can be found in characteristic $0$ as (see \cite{Curtis-1962}, p. 306, Theorem $42.8$), and the proof is based on {\it Witt-Berman induction theorem} (see \cite{Witt}, \cite{Berman}, \cite{Curtis-1962}, p. $302$, Theorem $42.3$). A simplified proof in characteristic $p$ using Brauer characters can be found in \cite{Reiner-1964}.

In this paper, we prove Witt-Berman theorem when $F$ is a field of characteristic $0$ or prime to the order of $G$. We define $F$-conjugacy as follows. We say two elements $x,y \in G$ are {\it $F$-conjugate}, denoted by $\sim_{F}$, if $\chi(x) = \chi(y)$ for all $F$-characters $\chi$ of $G$. Note that $\sim_{F}$ is an equivalence relation on the elements of $G$. Here, using our definition of $F$-conjugacy, we
prove that $x \sim_{F} y$ if and only if  $x, y$ are Berman conjugate, and so the number of inequivalent irreducible $F$-representations of $G$ is equal to the number of $F$-conjugacy classes of elements of $G$. We also show that the $F$-conjugacy class of an element in $G$ of order $n$, is a union of certain conjugacy classes of $G$, and is determined by the roots of a single irreducible factor of $\Phi_n(X)$, the $n$-th cyclotomic polynomial, over $F$. Witt-Berman theorem allows us to consider {\it $F$-character table}, like usual the character table. We give an explicit formula for {\it primitive central $F$-idempotents} (pci's) of $F[G]$ corresponding to irreducible $F$-representations of $G$, and which can be read from the $F$-character table. The formula is known (see \cite{Lux-2010}, p. 89, Theorem 2.1.6), but we express it in terms of the irreducible $F$-characters and $F$-conjugacy classes. 

Now suppose $G$ contains a normal subgroup of prime index $p$.
In case $F$ is algebraically closed,  it is well known that as a consequence of {\it Clifford's theorem} (see \cite{MR1503352}, \cite{MR2270898}, p. $79$, Theorem $6.2$, \cite{Dornhoff-1971}, p.$72$, Theorem $14.1$), every irreducible representation of $H$ either induces an irreducible representation of $G$, or else it extends to $p$ distinct irreducible representations of $G$.  In $1955$, Berman (see \cite{MR0072139}), actually constructed primitive central idempotents of $F[G]$ in terms of the pcis of $F[H]$. In turn, this helps in actually constructing the matrices of representations of a finite solvable group over an algebraically closed field (see \cite{Isaacs-1993}, p. 21, Section 4). Berman's paper is in Russian. We do not know any reference which gives a complete proof of {\it Berman's theorem}. In this paper, we give such a proof, and extend Berman's theorem to the case when $F$ is not necessarily algebraically closed. This is the first main theorem of the paper. In turn, this helps to construct the pci's of the semisimple group algebra of a finite solvable group. 

Suppose that $G$ contains a normal subgroup $H$ of index $p$, a prime, and $F$ is not necessarily algebraically closed. Then the study of irreducible $F$-representations of $G$ amounts to determining the decomposition of induced representations induced from the irreducible representations of $H$, into irreducible components. In $1996$, Glasby (see \cite{Glasby-1996}) showed that there are six possibilities for the structure of the induced module, when $F$ is not assumed to be algebraically closed and its characteristic is positive. In $2004$, Glasby (see \cite{Glasby-2004}) using techniques from noncommutative ring theory showed that there are even more possibilities, when $F$ is not assumed to be algebraically closed and its characteristic is $0$. 
%In this case, the additional subtlety is that of Schur index. 
In this paper, using classical {\it Schur's theory} (see \cite{Schur}, \cite{MR1500832}, \cite{MR0122892}, \cite{MR2270898}, Chapter $10$) and {\it Wedderburn's theory} (see \cite{MR0122892}), we work out decomposition of induced representations into irreducible components. This is the second main theorem of the paper. In addition, we illustrate all the cases of the splitting of idempotents and decomposition of induced representations, by several examples.

This paper is arranged as follows: After setting up the necessary notation, in Sec. \ref{Schur Index}, we give two definitions of Schur index ``from the top" and ``from the bottom". In Sec. \ref{Witt-Berman theorem}, we define $F$-conjugacy, give a complete proof of Witt-Berman theorem (Theorem \ref{$F$-conjugacy}), and an explicit formula for primitive central $F$-idempotents (Theorem \ref{$F$-idempotents}). In Sec. \ref{berman's theorem}, we give a complete proof of Berman's theorem (Theorem \ref{Berman's theorem}). In Sec. \ref{main theorem}, we extend Berman's theorem to the case when $F$ is not necessarily algebraically closed (Theorem \ref{First Theorem}) . In Sec. \ref{Induced Representation}, for a group $G$ with a normal subgroup $H$ of index $p$, a prime, $F$ is not necessarily algebraically closed, we determine the decomposition of induced representation, induced from the irreducible representations of $H$ (Theorem \ref{Second Theorem}), into irreducible components, and finally, in Sec. \ref{Examples}, we illustrate our results, by several examples.

{\bf Notations.} The algebraic closure of $F$ is denoted by $\overline F$. If $V$ is a finite dimensional vector space over $F$, and $\rho:G\rightarrow {\rm GL}(V)$ is a group homomorphism, we say that $\rho$ is an {\it $F$-representation}  of $G$, and denote it by $\rho$. For an irreducible $F$-representation $\rho: G \longrightarrow GL(V)$, its corresponding pci in $F[G]$, we denote by $e_{\rho}$. For a positive integer $n$, a cyclic group of order $n$ is denoted by $C_n$. For a subgroup $H$ of $G$, and an $F$-representation $\eta$ of $H$, the {\it conjugate representation} of $\eta$ by an element $x \in G$ is denoted by $\eta^x$. For a prime $p$, and an indeterminate $X$, we denote $(1 + X + \dots + X^{p-1})/p$ by $e_{X}$. The rest of our notations are standard.
\section{Schur Index, Two Definitions}\label{Schur Index}
Let $\rho:G \rightarrow \textrm{GL}(V)$ be an irreducible $F$-representation of $G.$ We call the $F$-vector space $V$ the {\it representation-space}
of $\rho.$ The multiplication in $G$, extended
by linearity, turns $F[G]$ into an $F$-algebra. The representation $\rho:G \rightarrow \textrm{GL}(V)$ canonically extends to an $F$-algebra homomorphism 
$\rho^{*}: F[G] \rightarrow \textrm{End}(V),$
making $V$ an $F[G]$-module. For notational convenience, we use the same notation $\rho$ instead of $\rho^{*}$. Let $D = \{A \in \textrm{End}(V)| A\, \textrm{commutes with}\, \rho(g)\, \textrm{for all}~g \in G\}.$ By Schur's lemma $D$ is a finite-dimensional $F$-algebra which is a division ring. We call $D,$ the {\it centralizer of $G$ in $\rm{End}(V).$}
We can regard $V$ as a right vector space over $D^o, $ where $D^o$ is $D$ with the opposite multiplication: $x.y$ in $D^o$ equals $yx$ in $D.$ Let $n$ be the dimension of $V$ over $D^o,$ called the {\it reduced dimension} of $V.$ Then $M_n(D^o)$
is abstractly isomorphic to the minimal $2$-sided ideal of $F[G]$ which corresponds to the irreducible representation $\rho$. If we regard $F[G]$ as a $F[G]$-module, then $V$ occurs as a simple submodule of $F[G]$ with multiplicity $n$, or, in different terminology, in the left regular $F$-representation, $\rho$ appears with multplicity $n,$ the reduced dimension of $V.$ 
    
Now we briefly describe classical Schur's theory on group representations (see \cite{Schur},  \cite{MR1500832}) and connection with Wedderburn's theory of semisimple algebras. Let $\rho:G \rightarrow \textrm{GL}(V)$ be an irreducible $F$-representation of $G.$ Let $\overline F$ be the algebraic closure
of $F.$ Then $V \otimes_{F} \overline{F}$ is a  $\overline{F}$-representation of $G$. Schur asserted that $V \otimes_{F} \overline{F}$ decomposes into distinct irreducible $\overline{F}$-representations $\rho_1, \rho_2, \dots, \rho_k$
which occur with the same multiplicity, say, $m$. In other words, if $\rho_i: G \rightarrow \textrm{GL}_{\overline F}W_i,$ then $\rho \otimes_F{\overline F}= m(\rho_1 \oplus \rho_2 \oplus \dots \oplus \rho_k), $ and $V\otimes_F{\overline F} \cong m(W_{1}\oplus W_{2} \oplus \dots \oplus W_{k}).$ Its isotypic components $mW_i$ are 
canonically defined as submodules of $V\otimes_F{\overline F}.$ The number $m$ was later called the {\it Schur index of $\rho$}. 
    
The main new ingredients from the theory of semisimple $F$-algebras are the following. Let $D$ be a finite dimensional $F$-algebra which is a division ring. Let $Z$ be its center. Then $Z$ is a finite dimensional field extension of $F,$ and $D$ has a structure of a $Z$-algebra. Then $[D : Z] = m^2,$ for some natural number $m$. $D$ contains maximal subfields $E$ such that $[E: Z] = m.$ Moreover $D\otimes_ZE$ is isomorphic to $M_m(E)$, the $m x m$ matrix ring with entries in $E$. This is a {\it split} simple $E$-algebra. The {\it Schur index of $D$ (or more generally, $M_n(D),$ for any $n$) over $F$} may be  defined to be $m$. This will be used in the theory of representations of finite groups as follows. 
    
Let $\rho: G \rightarrow \textrm{GL}(V)$ be an irreducible $F$-representation of $G.$ Let the minimal $2$-sided ideal of $F[G]$ corresponding to
$\rho$ be abstractly isomorphic to $M_n(D),$ for suitable $n$, and a division ring $D$. Let $Z$ be the center of $D, [Z:F] = k,$ and $E$ a maximal subfield of $D$.
Then $E$ contains $Z.$ Note that $Z[G] = F[G] \otimes_F Z$ is a sum of certain number of minimal $2$-sided ideals, which correspond 
to irreducible $Z$-representations of $G$. Now $M_n(D)\otimes_{F} Z$ is isomorphic to a summand of $Z[G].$ Since $Z$ is the center of $D,$ we have $M_n(D)\otimes_F Z \cong M_n(D\otimes_F Z) \cong M_n(D + D + \dots +D) \cong M_n(D) + M_n(D) + \dots + M_n(D),$ where in the last two terms there are $k$ summands. Each of these summands is isomorphic to
a minimal $2$-sided ideal of the group algebra $Z[G].$ In terms of representations,   $V\otimes_F Z$ splits into $k$ simple $Z[G]$-summands, $V_1 \oplus V_2 \oplus \dots \oplus V_k.$ Each $V_i$ is a $Z$-vector space, which by restriction of scalars
may be regarded as an $F$-vector space. We have $\textrm{dim}_F V = (\textrm{dim}_F Z)(\textrm{dim}_ZV) = k(\textrm{dim}_ZD) (\textrm{dim}_DV) = km^2n = k\textrm{dim}_ZV_i$ for each $i = 1, 2,\dots, k.$ So, $\rho\otimes_F Z$ splits into $k$ distinct irreducible $Z$-representations of $G.$ 
For more precision, we shall write $M_n(D)_i$ for the $i$-th summand in $M_n(D)\otimes_{F} Z \cong M_n(D) + M_n(D) +  \dots + M_n(D),$ which itself occurs as a summand in $Z[G].$ We have $M_n(D)_i \cong M_n(D), $ for each $i = 1, 2, \dots , k,$ and the corresponding representation-space is $V_i.$ 
    
Let us now consider $E[G] = Z[G] \otimes_Z E.$ Each summand of $E[G]$, isomorphic to 
$M_n(D)\otimes_Z E \cong M_n(D\otimes_Z E) \cong M_n(M_m(E)) \cong M_{nm}(E).$ The last term is a single, split,  $E$-algebra. Let the corresponding irreducible representation-space  be denoted by a $E$-vector space, say $W,$ of  dimension $nm,$ so of dimension  $nm\textrm{dim}_ZE = nm^2$ over $Z.$ More precisely, for each $i = 1, 2,\dots, k,$ corresponding to the summand $M_n(D)_i$ of $E[G]$, we have an irreducible representation-space $W_i.$ 
    
Now consider $V_i\otimes_Z E$. It is  an $E$-representation-space of $G.$ As such it decomposes into irreducible $E$-representation-spaces of $G.$ From the above description, its irreducible components are $E$-representation-spaces isomorphic to $W_i.$ Since $\textrm{dim}_Z V_i\otimes_Z E = nm^2, $ it follows that $V_i\otimes_Z E$ is a sum of $m$ copies of $W_i.$
    
To summarise: {\it Let $\rho:G \rightarrow \textrm{GL}(V)$ be an irreducible $F$-representation of $G.$ Then, with $D, Z, E,$ as defined above,   we have $V\otimes_F E = V\otimes_F Z \otimes_Z E= (V_1 + V_2 + \dots +V_k) \otimes_{Z} E = m(W_1 + W_2 + \dots + W_k), $ a decomposition into distinct representation-spaces $W_1,  W_2,  \dots ,W_k$ of $E$-irreducible representations, each occurring with the same multiplicity $m,$ where $m$ is the Schur index of $\rho$}, and $V_{i} = mW_{i},$ for each $i = 1,2, \dots, k.$
    
Note that the $D$'s occurring in the $F$-representation of finite groups are special.
Namely, $Z =$ the centre of $D$, is a {\it cyclotomic extension} of $F$, that is, it is contained in an extension of the form $F(\zeta_r)$, where $\zeta_r$ is  a primitive $r$-th root of unity. This may be seen as follows.

If $u$ is the exponent of $G$, then by Brauer's theorem $F(\zeta_u)$ is a splitting field for all representations of $G$. So, the center of $M_n(D) \otimes_F F(\zeta_u)$ is a direct sum of certain number of copies of $F(\zeta_u)$, and $Z \otimes_F 1$ is injected in the center of $M_n(D) \otimes_F F(\zeta_u)$.
    
Reiner, (see \cite{MR0122892}), actually gives a different definition of the Schur index. If we consider $F$ as the ``bottom'' and $\overline F$ as the ``top'', Reiner gives a definition of the Schur index from a viewpoint of the ``top''. The fields $Z, E$ mentioned above arise as subfields of $\overline F$, which depend on a specific irreducible $F$-representation. This needs some more terminology, for which we follow Martin Isaacs's textbook, (see \cite{MR2270898}, Chapter 10).
    
Let $\tilde\rho$ be an irreducible representation of $G$ over $\overline{F}.$ Let $\chi = \chi_{\tilde\rho}$ be the character of $\tilde\rho$. Then for each $g$ in $G$, $\chi(g)$ is the sum of eigenvalues of $\tilde\rho(g).$ Let $u$ be the exponent of $G,$ that is the 
l.c.m. of the orders of elements of $G$. Then $\chi(g)$ is a  sum of $u$-th roots of unity. So $\chi(g)$ is an element of the field $F(\zeta_u),$ where $\zeta_u$ is a primitive $u$-th root of unity. Let $F(\chi) = F(\chi_{\tilde\rho})$ be the extension field of $F$ obtained by adjoining all $\chi(g)$'s for $g$ in $G.$ It is a subfield of $F(\zeta_u).$ It is called the {\it character field} of $\tilde{\rho}$ over $F.$ Since  
$F(\zeta_u)$ is an abelian Galois extension of $F,$ it follows that $F(\chi)$ is also an abelian Galois extension field of $F.$ Let $A$ be the Galois group of $F(\chi)$ over $F.$ It is easy to see that for each $\alpha,$ an element of $A,$ the values $\alpha(\chi(g))$ are also values of a character of a representation of $G$ over $\overline{F}.$ In effect, starting with $\tilde \rho,$ we have obtained $|A|$ distinct representations  of $G,$ over $\overline{F}.$ Any two of these representations are called {\it algebraically conjugate}. In this way we have obtained a class of mutually inequivalent algebraically conjugate representations of $G.$ These representations have different characters, but they all have the same character field.
    
Let $Z_1$ denote the field $F(\chi),$ and $e=e_{\tilde\rho}$ be the pci corresponding to $\tilde\rho.$ Then  $e$ is in $Z_1[G],$ and $Z_1[G]e$ is a minimal $2-$sided ideal of $Z_1[G].$ By the Artin-Wedderburn theorem,
$Z_1[G]e \cong M_n(D_1),$ for some $n$ and some division ring $D_1.$ Then $V_1 \cong D_1^n$ is a representation
space of $G$ over $Z_1.$ Then $Z_1$ is  the center of $D_1$ and the dimension of $D_1$ over $Z_1$ is $m^2$ for some
$m.$  If $E_1$ is a maximal subfield of $D_1$, it can serve as a splitting field for $D_1,$ and so 
the representation $\tilde\rho,$ is {\it realisable over $E_1.$} That is, we can choose a basis of the representation space 
$V_1\otimes E_1, $ w.r.t. which all the entries of the matrices $\tilde\rho(g)$ for all $g$ in $G$  lie in $E_1.$ 
Since the dimension of $E_1$ over $Z_1$ is $m,$ and $m$ is the least such dimension, 
we can take the second definition, due to I.~Reiner (see \cite{MR0122892}), of Schur index as the minimum of the dimensions of fields  $\tilde E$ over which the representation $\tilde\rho,$ is {\it realisable over $\tilde E.$}
Let $V$ be the vector space over $F$ obtained from $V_1$ by restriction of scalars from $Z_1$ to $F.$ Consider it 
as a representation space of a representation  $\rho$ of $G.$ It is easy to see that $D_1$ is the centraliser of $G$ in $End V.$
Then the Schur index of $\rho$ according to the first definition, equals the Schur index  of the representation of $\tilde\rho$ 
according to the second definition. These two definitions are related in the following way.
  
{\it Theorem. Let $\rho: G \longrightarrow GL(V)$ be an irreducible $F$-representation of $G$. Let $D, Z, [D : Z ] = m^2, [Z : F] = k$ be as in the above. Then\\
(1) $Z$ is a cyclotomic extension of $F$.\\
(2) $V\otimes_{F}Z$ splits into distinct irreducible $Z$-representations $V_1, V_2, \dots, V_k$. The Galois group Gal$(Z: F)$ acts simply transitively on the set $\{V_1, V_2, \dots, V_k\}$.\\
(3) $V\otimes_{F} {\overline F}$ splits into irreducible $\overline{F}$-representations $W_1, W_2, \dots, W_k$, each with multiplicity $m$.\\
(4) $Z$ is isomorphic to the character field of each of $W_1, W_2, \dots, W_k$.}
\section{$F$-conjugacy, $F$-character table and $F$-idempotents}\label{Witt-Berman theorem}
\subsection{$F$-conjugacy}
It is a well known fact, in the semisimple case, that is, when characteristic of $F$ is $0$ or prime to the order of $G$, character determines representation up to equivalence, that is, if $\chi_{\rho_1} = \chi_{\rho_2}$ then $\rho_1$ is equivalent to $\rho_2$. We say that a character $\chi$ is irreducible if it is not a non-trivial sum of two characters. Therefore, the number of inequivalent irreducible $F$-representations of $G$ is equal to the number of irreducible $F$-characters of $G$.\par
     
Two elements $x, y$ in $G$ are said to be {\it $F$-conjugate}: $x {\sim_F} y$ if for all
finite dimensional $F$-representations $(\rho, V)$ with the characters $\chi_{\rho}$, we have $\chi_{\rho}(x) = \chi_{\rho}(y).$  Notice that $\sim_F$ is an equivalence relation on $G$.
The {\it F-conjugacy} class of an element $x \in G$ consists of all
those elements in $G$, which are $F$-conjugate to $x.$ We denote the $F$-conjugacy class of
$x$ by $C_{F}(x)$ and the conjugacy class of $x$ by $C(x).$
In the following proposition, we describe the factorization of $n$-th cyclotomic
polynomial $\Phi_{n}(X)$ into monic irreducible factors over $F$.
\begin{prop}\label{decomposition cyclotomic}
Let $n$ be a positive integer.
Let $F$ be a field of characteristic $0$ or prime to $n$. Let $\Phi_n(X)=f_1(X)f_2(X)\cdots f_k(X)$ be the decomposition of $\Phi_n(X)$ into irreducible monic polynomials over $F$. Then
\begin{enumerate}
\item The degrees of all $f_i(X)$'s are the same.
\item Let $\zeta$ be a root of one $f_i(X)$. Then all the 
roots of $f_i(X)$ are $\{\zeta^{r_1}, \zeta^{r_2}, \ldots,\zeta^{r_s}\}$, 
where all $r_i$'s are natural numbers with $r_1=1$, and 
the sequence $\{ r_1, r_2,\ldots, r_s \}$ is independent of 
irreducible factors of $\Phi_n(X)$ and any root of $\Phi_n(X)$.
\end{enumerate}
\end{prop}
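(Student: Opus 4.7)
The plan is to prove this via Galois theory of cyclotomic extensions. Fix a primitive $n$-th root of unity $\omega \in \overline{F}$; then $F(\omega)$ is the splitting field of $\Phi_n(X)$ over $F$, hence a finite Galois extension. Any $\sigma \in K := \mathrm{Gal}(F(\omega)/F)$ must send $\omega$ to another primitive $n$-th root of unity, so $\sigma(\omega) = \omega^{a(\sigma)}$ for a unique $a(\sigma) \in (\mathbb{Z}/n\mathbb{Z})^{*}$. The assignment $\theta \colon \sigma \mapsto a(\sigma)$ is an injective group homomorphism, and I set $A := \theta(K) \leq (\mathbb{Z}/n\mathbb{Z})^{*}$ (this is exactly the subgroup already introduced in the introduction in the special case $u = n$). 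Identifying the primitive $n$-th roots of unity with $(\mathbb{Z}/n\mathbb{Z})^{*}$ via $\omega^{j} \leftrightarrow j$, the action of $K$ on these roots becomes the action of $A$ on $(\mathbb{Z}/n\mathbb{Z})^{*}$ by left multiplication.

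Since the factorization $\Phi_n(X) = f_1(X) \cdots f_k(X)$ into monic irreducibles over $F$ is the factorization into minimal polynomials of the $K$-orbits on the roots, the roots of each $f_i(X)$ are precisely one such orbit. For part (1), I observe that the orbits of the subgroup $A$ acting on the group $(\mathbb{Z}/n\mathbb{Z})^{*}$ by left multiplication are precisely the right cosets $Aj$, each of cardinality $|A|$; hence every $f_i(X)$ has the same degree $|A|$.

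For part (2), suppose $\zeta = \omega^{j}$ is a root of $f_i(X)$. Its $K$-orbit equals $\{\omega^{aj} : a \in A\} = \{\zeta^{a} : a \in A\}$. Listing the elements of $A$ as $r_1 = 1, r_2, \ldots, r_s$ (with $r_1 = 1$ because $1 \in A$), we obtain the description of the roots of $f_i(X)$ in the claimed form. Independence from $i$ is immediate because the exponent set is just $A$, which depends only on $F$ and $n$. Independence from the chosen root $\zeta$ is the point where the subgroup property of $A$ is crucial: if $\zeta' = \zeta^{b}$ with $b \in A$ is any other root of $f_i(X)$, then for each $a \in A$ we have $\zeta^{a} = (\zeta')^{ab^{-1}}$ (taking $ab^{-1}$ modulo $n$), and since $A$ is closed under products and inverses the set $\{ab^{-1} : a \in A\}$ equals $A$; so the sequence of exponents recovered from $\zeta'$ is the same as that from $\zeta$.

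Overall the result is essentially a translation of the well-known identification of $\mathrm{Gal}(F(\omega)/F)$ with a subgroup of $(\mathbb{Z}/n\mathbb{Z})^{*}$ into statements about exponents of roots. I do not anticipate any substantive obstacle; the only conceptual ingredient is that $A$, being a subgroup, satisfies $A \cdot b = A$ for every $b \in A$, which is exactly what delivers the ``independence of the root'' part of (2).
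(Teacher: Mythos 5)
Your proof is correct and follows essentially the same route as the paper: the paper's (very terse) proof rests on the fact that each irreducible factor $f_i(X)$ has splitting field $F(\zeta)=F(\omega)$, so that the roots of $f_i$ form a single orbit of $K=\mathrm{Gal}(F(\omega)/F)$, which is exactly the coset/orbit picture you spell out via the embedding $\theta\colon K\hookrightarrow(\mathbb{Z}/n\mathbb{Z})^{*}$. Your version is simply a fully detailed writing-out of that argument, including the (worth noting) use of the subgroup property of $A$ to get independence of the chosen root.
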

\begin{proof}
The proof follows from the fact that if $\zeta$ is a primitive $n$-th root of $1 \in F$ then the splitting field of each irreducible factor $f_i(X)$ of $\Phi_n(X)$ over $F$ is $F(\zeta)$, which is also the splitting field of $\Phi_{n}(X)$ over $F$.
\end{proof}
\begin{thm}\label{$F$-conjugacy}
Let $G$ be a finite group and $F$ be a field of characteristic $0$ or prime to the order of $G.$ Then the number of mutually inequivalent irreducible $F$-representations of $G$ is equal to the number of $F$-conjugacy classes of elements of $G$.
\end{thm}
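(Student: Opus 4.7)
The strategy is to prove the equality by sandwiching. I will establish (i) Berman-conjugacy implies $F$-conjugacy, so each Berman class is contained in a single $F$-conjugacy class; (ii) the irreducible $F$-characters are $F$-linearly independent functions that are constant on $F$-conjugacy classes by definition, so the number of inequivalent irreducible $F$-representations is at most the number of $F$-classes; and (iii) the number of inequivalent irreducible $F$-representations equals the number of Berman-conjugacy classes, by a Galois orbit-counting argument built on the Schur--Wedderburn analysis of Section~\ref{Schur Index}. Chaining these gives
\[
\#\{\text{inequiv.\ irr.\ }F\text{-reps}\}\;\le\;\#\{F\text{-classes}\}\;\le\;\#\{\text{Berman classes}\}\;=\;\#\{\text{inequiv.\ irr.\ }F\text{-reps}\},
\]
forcing equality throughout and yielding the theorem.

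For (i), let $K=\mathrm{Gal}(F(\omega)/F)$ with $\omega$ a primitive $u$-th root of unity ($u$ the exponent of $G$), and $A=\theta(K)\leq\mathbb{Z}_u^{*}$ as in the introduction. Suppose $g^{-1}xg=y^{j}$ with $j\in A$. For any $F$-representation $\rho$ with character $\chi$, the eigenvalues $\zeta_{1},\dots,\zeta_{n}$ of $\rho(y)$ are $u$-th roots of unity in $F(\omega)$, so $\chi(y^{j})=\sum_{i}\zeta_{i}^{j}=\sigma(\chi(y))$ where $\sigma\in K$ satisfies $\theta(\sigma)=j$. Since $\rho$ has entries in $F$, its trace $\chi(y)$ lies in $F$ and is fixed by $\sigma$, giving $\chi(x)=\chi(g^{-1}xg)=\chi(y^{j})=\chi(y)$; hence $x\sim_{F}y$. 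Step (ii) is immediate from the standard fact that distinct irreducible $F$-characters, viewed as class functions on $G$, are $F$-linearly independent in the semisimple setting.

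For (iii), I would apply Brauer's permutation lemma to the $K$-action on the set of irreducible $\overline{F}$-characters (via $\chi^{\sigma}(g)=\sigma(\chi(g))=\chi(g^{\theta(\sigma)})$) and on the set of conjugacy classes of $G$ (via $[g]\mapsto[g^{\theta(\sigma)}]$). These two actions leave invariant the character-table matrix up to a simultaneous permutation of rows and columns; since the character table is invertible, for each $\sigma\in K$ the numbers of $\sigma$-fixed rows and $\sigma$-fixed columns agree. By Burnside's orbit-counting formula, the number of $K$-orbits on $\mathrm{Irr}_{\overline{F}}(G)$ equals the number of $K$-orbits on conjugacy classes of $G$. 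The summary theorem at the end of Section~\ref{Schur Index} (the decomposition $V\otimes_{F}\overline{F}\cong m(W_{1}\oplus\cdots\oplus W_{k})$ with $\mathrm{Gal}(Z/F)$ acting simply transitively on $\{W_{1},\dots,W_{k}\}$) identifies the former orbit count with the number of inequivalent irreducible $F$-representations, while by its very definition the latter orbit count is the number of Berman-conjugacy classes. This proves (iii).

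The main obstacle is step (iii): one must carefully verify that the $K$-action by field automorphisms on $\mathrm{Irr}_{\overline{F}}(G)$ partitions it into exactly the blocks $\{W_{1},\dots,W_{k}\}$ attached to each irreducible $F$-representation via Schur--Wedderburn, and one must apply Brauer's permutation lemma with the map $g\mapsto g^{\theta(\sigma)}$ which is merely a well-defined map on conjugacy classes rather than a group automorphism. Once this combinatorial bridge is laid, steps (i) and (ii) are routine unravellings of definitions, and the Proposition~\ref{decomposition cyclotomic} on the factorization of $\Phi_{n}(X)$ gives the explicit description of $F$-conjugacy classes as unions of ordinary conjugacy classes indexed by $\{r_{1},\dots,r_{s}\}$ which is promised in the introduction.
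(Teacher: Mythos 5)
Your proof is correct, and it reaches the theorem by a genuinely different route from the paper's. Step (i) coincides with the paper's opening claim ($[g]_{\sim}\subseteq[g]_{\sim_F}$, proved by exactly the same Galois-fixes-$F$-valued-traces computation), but after that the arguments diverge. The paper works with the two spaces of class functions $M$ (functions constant on Berman classes) and $N$ (functions constant on $F$-classes): it shows that any $\theta\in M$, written as an $F(\omega)$-linear combination of absolutely irreducible characters, must have coefficients constant on Galois orbits and lying in $F$, hence lies in $\mathrm{span}\{\phi_1,\dots,\phi_r\}$; since $\mathrm{span}\{\phi_i\}\subseteq N\subseteq M$ and $\dim M$, $\dim N$ count the Berman and $F$-classes respectively, all three numbers equal $r$. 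You instead sandwich the three counts and close the loop with Brauer's permutation lemma plus Burnside orbit counting, identifying the number of irreducible $F$-representations with the number of $K$-orbits on $\mathrm{Irr}_{\overline{F}}(G)$. Both arguments ultimately rest on the same input from Section \ref{Schur Index} (the $\overline{F}$-constituents of an irreducible $F$-representation form a single Galois orbit, each occurring with multiplicity the Schur index); the paper's version has the advantage of being self-contained linear algebra and of directly exhibiting $\{\phi_1,\dots,\phi_r\}$ as a basis of the space of functions constant on $F$-classes, which is then reused for the $F$-character table and the idempotent formula of Theorem \ref{$F$-idempotents}, whereas yours makes the orbit-counting structure more transparent at the cost of importing Brauer's lemma. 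If you write your step (iii) out in full, you should record three points you currently leave implicit: that $\sigma\circ\chi$ is again an irreducible $\overline{F}$-character (apply $\sigma$ entrywise to a matrix realisation over $F(\omega)$, which is a splitting field), that the character table remains nonsingular in positive characteristic prime to $|G|$ (second orthogonality, using that $|G|$ is invertible), and that distinct irreducible $F$-representations have disjoint and exhaustive sets of $\overline{F}$-constituents (Wedderburn decomposition of $F[G]\otimes_F\overline{F}$), so that Galois orbits on $\mathrm{Irr}_{\overline{F}}(G)$ really are in bijection with irreducible $F$-representations.
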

\begin{proof}
Let $u$ be the exponent of $G$. Let $\omega$ be a primitive $u$-th root of $1 \in F$. Let $K$ be the Galois group of $F(\omega)$ over $F$. Let $A$ be the subgroup of $\mathbb{Z}_{u}^*$, which gives all the $F$-automorphisms in $K$. Define the following
relations on $G$:
\begin{align*}
g\sim h & \mbox{ if } g  ~\mbox{is Berman-conjugate to h, that is,}~ g \mbox{ is conjugate to } h^a \mbox{ for some } a\in
A;\\
g\sim_F h & \mbox{ if } \chi(g)=\chi(h) \mbox{ for all 
$F$-characters $\chi$ of $G$}.
\end{align*}
For any $g \in G$, let $[g]_{\sim}$ and $[g]_{\sim_F}$ denotes the equivalence classes of $g$ under $\sim$ and $\sim_F$ respectively.\\
        
Claim: $[g]_{\sim}\subseteq [g]_{\sim_F}$ for all $g\in G$. 
        
Let $h \in [g]_{\sim}$. Then $g$ is conjugate to $h^a$ for some $a \in A$, which implies $\chi(g) = \chi(h^a)$ for all $F$-characters $\chi$. Note that $\sigma : w \longrightarrow w^a$ is an $F$-automorphism of $F(\omega)$. So, for all $F$-characters $\chi$, $\chi(g) = \chi(h^a) = \sigma(\chi(h)) = \chi(h)$, which implies that $h \in [g]_{\sim_F}$. Therefore, $[g]_{\sim}\subseteq [g]_{\sim_F}$ for all $g\in G$.\\
        
Consider the sets 
\begin{align*}
M &=\{ f:G\rightarrow F ~\mid~  \mbox{$f$ is constant on each $\sim$
equivalence class of $G$}\},\\
N &=\{ f:G\rightarrow F ~\mid~  \mbox{$f$ is constant on each $\sim_F$
equivalence class of $G$}\}.
\end{align*}
Then $M$ and $N$ are $F$-vector spaces. Let $\phi_1, \phi_2, \dots, \phi_r$ be the all distinct irreducible $F$-characters of $G$. By definition, 
$\mbox{span}\{ \phi_1,\ldots,\phi_r\}\subseteq N\subseteq M.$
We show that $M=\mbox{span}\{ \phi_1,\ldots,\phi_r\}$,
which will imply that $[g]_{\sim}=[g]_{\sim_F}$, and the proof will be complete.\par
        
Consider $\theta\in M$. As $\theta$ is constant on each conjugacy class of
$G$, we can write $\theta=\sum_{i=1}^k c_i\chi_i$ where $c_i\in F(\omega)$ and
$\chi_i$'s are
irreducible characters of $G$ over $F(\omega)$.
Let $\sigma$ be the $F$-automorphism $\omega\mapsto
\omega^a$ of $F(\omega)$. Then for any $g\in G$, $\theta(g)=\theta(g^a)$, hence
$$\sum_{i=1}^k c_i\sigma(\chi_i(g))=\sum_{i=1}^k
c_i\chi_i(g^a)=\theta(g^a)=\theta(g)=\sum_{i=1}^k c_i\chi_i(g).$$
Therefore $c_i=c_j$ if and only if $\sigma(\chi_i)=\chi_j$ (by linear independence of
$\chi_i$'s over $F(\omega)$). Therefore $\theta$ is an $F(\omega)$-linear
combination of $\phi_i$'s, say 
$$\theta=\sum_{i=1}^r d_i\phi_i.$$\par
        
Claim: Each $d_i$ belongs to $F$.\par
        
Since $\phi_1,\ldots,\phi_r$ are linearly independent over $F$ (see\cite{MR2270898}, Theorem $9.22$), there exists
$x_j\in G$ for $j=1,2,\ldots, r$ such that the $r\times r$ matrix
$(\phi_i(x_j))$ is non-singular. Therefore, $d_i$'s are uniquely determined by
the system of linear equations
$$\theta(x_j)=\sum_{i=1}^r d_i\phi_i(x_j), \hskip5mm j=1,2,\ldots, r.$$
It follows that $d_i=\sigma(d_i)$, for any $F$-automorphism $\sigma$ of
$F(\omega)$. Thus $\theta$ is an $F$-linear combination of $\phi_1,\ldots, \phi_r$. Therefore, span$\{\phi_1,\ldots, \phi_r\} = N = M.$ Therefore, $\{\phi_1, \dots, \phi_r\}$ is a basis of both $N$ and $M$. Consequently, $[g]_{\sim}=[g]_{\sim_F}$, which implies that the number of equivalence classes of elements of $G$ under $\sim$ is equal to the number of $F$-conjugacy classes of elements of $G$. 
Since $r = \mbox{dim} M$ then the number of $F$-irreducible representations of $G$ is equal to the number of $F$-conjugacy classes of elements of $G$. This completes the proof.
\end{proof}
\begin{cor}\rm
The set of inequivalent irreducible $F$-characters of $G$ forms a basis of the space of all functions $f:G \longrightarrow F$, which are constant on each $F$-conjugacy class of $G$.
\end{cor}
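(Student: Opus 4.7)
The key observation is that the space of $F$-valued functions on $G$ that are constant on each $F$-conjugacy class is precisely the space $N$ introduced inside the proof of Theorem \ref{$F$-conjugacy}, namely $N = \{f : G \to F \mid f \text{ is constant on each } \sim_F\text{-class}\}$. Consequently, the corollary is essentially a repackaging of facts already established in that proof.

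The plan is therefore as follows. I would first identify the space named in the corollary with the space $N$ from the previous proof. Next, I would invoke what was shown there: the chain $\operatorname{span}_F\{\phi_1, \ldots, \phi_r\} \subseteq N \subseteq M$ was verified directly from the definitions of $M$ and $N$ together with the containment $[g]_{\sim} \subseteq [g]_{\sim_F}$, and the Galois-descent argument of the theorem produced the reverse inclusion $M \subseteq \operatorname{span}_F\{\phi_1, \ldots, \phi_r\}$. Sandwiching $N$ between these gives $N = \operatorname{span}_F\{\phi_1, \ldots, \phi_r\}$, so the irreducible $F$-characters span $N$. For the linear independence of $\phi_1, \ldots, \phi_r$ over $F$, I would again cite the reference (Isaacs, Theorem 9.22) already used in the theorem's proof, which provides points $x_1, \ldots, x_r \in G$ at which the matrix $(\phi_i(x_j))$ is nonsingular. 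Spanning together with linear independence yields that $\{\phi_1, \ldots, \phi_r\}$ is an $F$-basis of $N$, which is exactly the corollary.

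There is no substantive obstacle to overcome: all of the hard work, in particular the descent argument that collapses an arbitrary $F(\omega)$-class function constant on Berman classes to an $F$-linear combination of the $\phi_i$, has already been carried out inside Theorem \ref{$F$-conjugacy}. The corollary merely reformulates the conclusion of that theorem as a basis statement for the space of functions cut out by the $F$-conjugacy relation.
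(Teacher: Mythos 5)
Your proposal is correct and follows exactly the route the paper intends: the proof of Theorem \ref{$F$-conjugacy} already establishes $\operatorname{span}\{\phi_1,\ldots,\phi_r\} = N = M$ and records that $\{\phi_1,\ldots,\phi_r\}$ is a basis of $N$, so the corollary is immediate. Nothing further is needed.
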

\begin{thm}
Let $F$ be a field of characteristic $0$ or prime to the order of $G.$ Let $x$ be an element of order $n$ in $G$.
Then $C_{F}{(x)}$ is equal to $C(x^{r_1})\cup C(x^{r_2})\cup\cdots \cup C(x^{r_s})$, where $r_1 = 1, r_2,\ldots, r_s$ is the sequence associated with $\Phi_n(X)$ as in the Proposition \ref{decomposition cyclotomic}.
\end{thm}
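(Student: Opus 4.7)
The plan is to reduce the statement to the Witt--Berman identification already proved in Theorem \ref{$F$-conjugacy}, and then translate it through Galois theory into the combinatorial description via the exponents $\{r_1, \ldots, r_s\}$ of Proposition \ref{decomposition cyclotomic}. The proof of Theorem \ref{$F$-conjugacy} established that $\sim_F$ coincides with Berman-conjugacy, so
$$ C_F(x) = \bigcup_{a \in A} C(x^a),$$
where $A$ is the image of $\mathrm{Gal}(F(\omega)/F)$ in $\mathbb{Z}_u^{*}$ under the map $\theta$ of the introduction, with $\omega$ a primitive $u$-th root of unity and $u$ the exponent of $G$.

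Next, since $x$ has order $n$ and $n \mid u$, the element $x^a$ depends only on the residue of $a$ modulo $n$, so I would replace $A$ by its image $\overline{A}$ under the natural reduction map $\mathbb{Z}_u^{*} \twoheadrightarrow \mathbb{Z}_n^{*}$. The crucial step is to identify $\overline{A}$ concretely. Since $n \mid u$, we have $F(\zeta) \subseteq F(\omega)$ for $\zeta$ a primitive $n$-th root of unity, and the restriction map $\mathrm{Gal}(F(\omega)/F) \twoheadrightarrow \mathrm{Gal}(F(\zeta)/F)$ is surjective. Chasing through the embeddings of both Galois groups into the relevant unit groups, one verifies that $\overline{A}$ coincides with the image of $\mathrm{Gal}(F(\zeta)/F)$ in $\mathbb{Z}_n^{*}$ under the analogue of $\theta$, namely $\sigma \mapsto r$ where $\sigma(\zeta) = \zeta^r$.

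Finally, I would apply Proposition \ref{decomposition cyclotomic}: the Galois conjugates of $\zeta$ over $F$ are precisely the roots of the irreducible factor of $\Phi_n(X)$ containing $\zeta$, that is $\{\zeta^{r_1}, \ldots, \zeta^{r_s}\}$. Hence $\overline{A} = \{r_1, \ldots, r_s\}$ as subsets of $\mathbb{Z}_n^{*}$, and combining this with the first display yields $C_F(x) = \bigcup_{j=1}^{s} C(x^{r_j})$, as desired. The only step requiring genuine care is the compatibility of the reduction $\mathbb{Z}_u^{*} \twoheadrightarrow \mathbb{Z}_n^{*}$ with the restriction $\mathrm{Gal}(F(\omega)/F) \twoheadrightarrow \mathrm{Gal}(F(\zeta)/F)$; this is a standard consequence of the Galois theory of cyclotomic extensions, so no serious obstacle is expected.
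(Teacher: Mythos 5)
Your proposal is correct and follows essentially the same route as the paper: both rest on the identification of $\sim_F$ with Berman conjugacy from Theorem \ref{$F$-conjugacy}, followed by passing from exponents in $\mathbb{Z}_u^{*}$ to exponents in $\mathbb{Z}_n^{*}$ and identifying the latter with $\{r_1,\dots,r_s\}$ via the Galois group of $F(\zeta)$ over $F$ and Proposition \ref{decomposition cyclotomic}. The only minor difference is that the paper establishes the inclusion $\bigcup_i C(x^{r_i})\subseteq C_F(x)$ by a direct character computation ($\chi(x^{r_i})=\sigma_i(\chi(x))=\chi(x)$ since $\chi(x)\in F$), whereas you deduce it from the surjectivity of the restriction map $\mathrm{Gal}(F(\omega)/F)\twoheadrightarrow\mathrm{Gal}(F(\zeta)/F)$; both are valid.
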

\begin{proof}
Let $u$ be the exponent of $G$. As the order of $x$ is $n$ then $n$ is a divisor of $u$. Let $\omega$ be a primitive $u$-th root and $\zeta$ be a primitive $n$-th root of $1\in F$. Then $\zeta = \omega^a,$ for some positive integer $a$. Let $K$, $L$ be the Galois group of $F(\omega)$ and $F(\zeta)$ over $F$ respectively. Let $A$, $B$ be the subgroup of $\mathbb{Z}^{*}_{u}$, $\mathbb{Z}^{*}_{n}$ respectively, which give all the $F$-automorphisms of $F(\omega)$, $F(\zeta)$ in $K$ and $L$ respectively. Therefore $B = \{\overline{r_1} = \overline{1}, \overline{r_2}, \dots, \overline{r_s}\}$ where $\overline{r_i} = r_{i} (\mbox{mod}n)$.
        
Note that $C_F(x)$ is invariant under all inner automorphisms of $G$. So it is  union of conjugacy classes. We first prove that $C(x)\cup C(x^{r_{2}})\cup\cdots\cup C(x^{r_{s}})
\subseteq C_{F}(x)$. 
        
Let $\rho$ be an arbitrary  $F$-representation of $G$, and $\chi$ be its character.
Then $\chi(x)=\zeta^{u_1}+\zeta^{u_2}+\cdots +\zeta^{u_t}$ for some
$u_i\in\mathbb{Z}$, and it belongs to $F$.
Since $\sigma_i:\zeta\mapsto \zeta^{r_i}$ is an element in $L$, then
$$\chi(x^{r_i})=\zeta^{r_iu_1}+\zeta^{r_iu_2}+\cdots+\zeta^{r_iu_t}
=\sigma_i(\zeta^{u_1}+\cdots+\zeta^{u_t})=\sigma_i(\chi(x))=\chi(x).$$
This implies that $C(x)\cup C(x^{r_{2}})\cup\cdots\cup C(x^{r_{s}})
\subseteq C_{F}(x)$.
        
Now we prove the other inclusion. Let $y$ be an element of $C_{F}(x) = [x]_{\sim_F}$. Since $[x]_{\sim_F} = [x]_{\sim}$ then $y$ is conjugate to $x^a,$ for some $a \in A,$ which implies that $y$ is conjugate to $x^a,$ for some $a \in B.$ Therefore $y \in C(x)\cup C(x^{r_{2}})\cup\cdots\cup C(x^{r_{s}})$, and hence $C_{F}(x) \subseteq C(x)\cup C(x^{r_{2}})\cup\cdots\cup C(x^{r_{s}})$. 
This completes the proof.
\end{proof}
\begin{example}
Let $F = {\mathbb{Q}}$. Then 
$C_{\mathbb{Q}}(x)$ is equal to the union of $C(x^i)$, 
where $i$ runs over the positive integers relatively prime to $n$. 
\end{example}
\begin{example}
Let $F=\mathbb{R}$. Then  $C_{\mathbb{R}}(x)$ 
is equal to the union of $C(x)$ and $C(x^{-1})$.
\end{example}
\begin{example}
Let $F=\mathbb{F}_q$, $q = p^r$, $p$ a prime and $r$ is a positive integer. 
Let $d$ be the smallest positive integer such that $q^d \equiv 1 \pmod n$. Then $C_{F}{(x)}$ is equal to the union of $C(x^{q^i})$ for 
$i= 0,1,\ldots, d-1$.
\end{example}
\begin{cor}
Let $x$ be an element in $G$, and of order $n$. Then
the $F$-conjugacy class of $x$ is uniquely determined by the roots of just one irreducible factor of $\Phi_{n}(X)$ over $F$.
\end{cor}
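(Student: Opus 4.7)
The plan is to read this corollary as a direct combination of the preceding theorem with Proposition \ref{decomposition cyclotomic}. The preceding theorem already identifies $C_F(x)$ with $C(x^{r_1})\cup C(x^{r_2})\cup\cdots\cup C(x^{r_s})$, where $\{r_1,r_2,\ldots,r_s\}$ is exactly the sequence of exponents described in Proposition \ref{decomposition cyclotomic}. So the content of the corollary is simply to observe that this sequence, already known to be intrinsic to $\Phi_n(X)$ (independent of which irreducible factor and which root one chose), can equivalently be read off from the roots of any single irreducible factor of $\Phi_n(X)$ over $F$.

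Concretely, I would begin by letting $\zeta$ be a primitive $n$-th root of unity in $\overline{F}$ and fixing any irreducible factor $f_i(X)$ of $\Phi_n(X)$ over $F$. By Proposition \ref{decomposition cyclotomic}(2), after choosing one root $\zeta'$ of $f_i(X)$ we may write all its roots as $\{(\zeta')^{r_1},(\zeta')^{r_2},\ldots,(\zeta')^{r_s}\}$, with $r_1=1$, and this sequence $(r_1,r_2,\ldots,r_s)$ depends on neither the factor $f_i(X)$ nor the chosen root $\zeta'$. In particular, taking $\zeta'=\zeta$, the set of exponents $\{r_1,\ldots,r_s\}\pmod n$ is intrinsic to $\Phi_n(X)$ and its factorization over $F$.

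Next I would invoke the preceding theorem: with $x$ of order $n$, one has
\[
C_F(x)=C(x^{r_1})\cup C(x^{r_2})\cup\cdots\cup C(x^{r_s}),
\]
for exactly the same sequence $(r_1,r_2,\ldots,r_s)$. Matching the root $(\zeta')^{r_j}$ of $f_i(X)$ to the conjugacy class $C(x^{r_j})$ via the bijection $\zeta\mapsto x$ (on the cyclic group generated by $x$, viewed as order-$n$ roots of unity), one sees that knowing the roots of one irreducible factor of $\Phi_n(X)$ is enough to recover all the exponents $r_j$, and hence all of $C_F(x)$.

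There is no real obstacle here; this is essentially a packaging statement. The only minor care needed is to state the correspondence between roots of $\Phi_n(X)$ and powers of $x$ precisely, and to emphasize the ``well-definedness'' part of Proposition \ref{decomposition cyclotomic}, namely that any single irreducible factor of $\Phi_n(X)$ already encodes the full sequence $(r_1,\ldots,r_s)$, so no choice of factor is actually being made.
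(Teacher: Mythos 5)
Your proposal is correct and matches the paper's intent exactly: the paper states this corollary without proof, as an immediate consequence of the preceding theorem (which gives $C_F(x)=C(x^{r_1})\cup\cdots\cup C(x^{r_s})$) together with Proposition \ref{decomposition cyclotomic}, which guarantees that the exponent sequence $(r_1,\ldots,r_s)$ can be read off from the roots of any single irreducible factor of $\Phi_n(X)$ over $F$. Your packaging of the root--power correspondence is precisely the argument the authors leave implicit.
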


\subsection{$F$-character table}
By Theorem \ref{$F$-conjugacy}, for a finite group $G$, the number of $F$-conjugacy classes of elements of $G$ is equal to the number of $F$-irreducible representations of $G$. We can list the $F$-character values on $F$-conjugacy classes in the form of a square matrix over $F$, which is called the {\it $F$-character table}. 
The columns of $F$-character table are parametrized by $F$-conjugacy classes and the rows are parametrized by irreducible $F$-characters. Since the number of $F$-conjugacy classes, in general, is less than or equal to the number of conjugacy classes, the size of the matrix representing the $F$-character table is smaller than the usual character table. 
\begin{remark}
Consider the important case $F = \mathbb{Q}$, and $G$ abelian.
Then the character table over $\overline F$ is a $|G| \times |G|$ square matrix. One the other hand, let $|G| = \displaystyle \Pi_i p^i$. Then the character table 
of $G$ over $F$, has size only $\displaystyle \Pi_i p(i)$, where $p(i)$ is the number of partitions of $i$, which depends only on the exponents of primes 
occurring in the prime factorization of $|G|$, and not on the actual primes themselves.
\end{remark}
\subsection{$F$-idempotents}
Now we give a formula for computing the pci's of $F[G]$ in terms of $F$-conjugacy classes. Let $R = F[G]$. Let $e_i$ ($i = 1,2,\ldots, r$) be the pci's of $R$, then each $Re_i$ is a minimal two-sided ideal of $R$. 
Let $V$ denote one of these simple $R$-modules and $e$ be 
the corresponding pci. Then by Schur's lemma, 
End$_{F[G]}(V)$ is a division ring $D$, whose center $Z$ contains $F.$ Let $n= \textrm{dim}_D{V}$. Then $Re$ is abstractly isomorphic 
to $M_n({D}^o)$, $V$ is isomorphic to $(D^o)^n$, and $Re$ is isomorphic to 
the direct sum of $n$ copies of $V$. Let $\dim_{F}{Z} = \delta$ and dim$_{Z}{D} = m^2$. 
Then $\dim_F{V} = nm^{2}\delta$, and so $\dim_{F}{Re} = n^2m^2{\delta}.$
    
Let $L_1, L_2,\dots , L_r$ be the $F$-conjugacy classes of $G$ and $C_{1}, C_{2}, \dots, C_{s}$ be the conjugacy classes of $G$. 
Let $\chi$ be any ${F}$-character of $G$. 
Let $\chi(L_i)$ denote the common value of $\chi$ over $L_i$. 
Let $L_{i}$ be the $F$-conjugacy class of $x$.
We denote the $F$-conjugacy class of $x^{-1}$ by $L_{i}{^{-1}}$. 
For any subset $S$ of $G$, $S^*$ denotes the formal sum of elements of $S.$ We use the above notation in the following theorem.
\begin{thm}\label{$F$-idempotents}
Let $F$ be a field of characteristic $0$ or prime to the order of $G.$
Let $(\rho, V)$ be an irreducible $F$-representation of $G,$ $\chi$ be its character and 
$e$ be the corresponding pci in $F[G]$. Let $n$ be the reduced dimension of $V$. 
Then $$e = \frac{n}{|G|} \sum_{i=1}^r \chi(L_i^{-1} ) L_i^*.$$
\end{thm}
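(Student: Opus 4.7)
The plan is a direct trace computation in the regular representation followed by repackaging the answer according to $F$-conjugacy classes.

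\textbf{Step 1: A trace identity in $F[G]$.} For any $x = \sum_{g \in G} a_g g \in F[G]$, let $L_x$ denote left multiplication by $x$ regarded as an $F$-linear endomorphism of $F[G]$. First I would verify the identity $\mathrm{tr}_F(L_x) = |G|\, a_1$. Indeed, in the basis $G$, $L_g$ permutes basis vectors and fixes one iff $g = 1$, so $\mathrm{tr}_F(L_g)$ is $|G|$ or $0$ accordingly; extending by linearity gives the claim.

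\textbf{Step 2: Computing $\mathrm{tr}_F(L_{eh^{-1}})$ through the isotypic component.} Since $e$ is a central idempotent, $F[G] = Re \oplus R(1-e)$ as two-sided ideals, and $L_{eh^{-1}}$ annihilates $R(1-e)$ while acting as $L_{h^{-1}}$ on $Re$. By the hypothesis that $V$ has reduced dimension $n$, we have $Re \cong V^{\,n}$ as left $R$-modules (this was recorded in Section \ref{Schur Index}). Hence
$$\mathrm{tr}_F\bigl(L_{eh^{-1}}\bigr) \;=\; n \cdot \mathrm{tr}_F\bigl(\rho(h^{-1})\bigr) \;=\; n\,\chi(h^{-1}).$$

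\textbf{Step 3: Extracting the coefficients of $e$.} Write $e = \sum_{h\in G} a_h h$. A direct expansion shows that the coefficient of $1$ in $eh^{-1}$ equals $a_h$. Applying Step~1 to $x = eh^{-1}$ and combining with Step~2 gives $|G|\,a_h = n\,\chi(h^{-1})$, that is,
$$a_h \;=\; \frac{n\,\chi(h^{-1})}{|G|}, \qquad e \;=\; \frac{n}{|G|}\sum_{h\in G}\chi(h^{-1})\,h.$$

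\textbf{Step 4: Regrouping by $F$-conjugacy classes.} The function $g \mapsto \chi(g^{-1})$ is the character of the contragredient $F$-representation $\rho^{*}$ of $\rho$, hence is itself an $F$-character; by the very definition of $\sim_F$ it is constant on each $F$-conjugacy class. In particular inversion sends each $F$-conjugacy class $L_i$ bijectively onto an $F$-conjugacy class, which is precisely $L_i^{-1}$ in the notation of the theorem, and the common value of $\chi$ on $L_i^{-1}$ is $\chi(L_i^{-1})$. Collecting equal terms in the sum over $G$ according to the $F$-conjugacy class of $h$ yields
$$e \;=\; \frac{n}{|G|}\sum_{i=1}^{r} \chi(L_i^{-1})\, L_i^{*},$$
as required.

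The essential input is the module isomorphism $Re \cong V^{n}$ from the Artin--Wedderburn decomposition of the semisimple algebra $F[G]$, which is why the hypothesis on $\mathrm{char}\,F$ is used; the remainder is a routine trace manipulation together with the observation that inversion permutes $F$-conjugacy classes. I do not foresee a serious obstacle; the only point worth stating carefully is that $\chi(L_i^{-1})$ is well defined, and that follows from $\rho^{*}$ being an $F$-representation.
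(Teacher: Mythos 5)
Your proof is correct, but it takes a genuinely different route from the paper. The paper argues ``from the top'': it extends scalars to $\overline F$, uses Schur's theorem to write $\rho\otimes_F\overline F = m(\tilde\rho_1\oplus\cdots\oplus\tilde\rho_\delta)$, quotes the standard pci formula $\tilde e_i = \frac{mn}{|G|}\sum_k\tilde\chi_i(C_k^{-1})C_k^*$ over the algebraically closed field together with the fact (cited from Jespers--del R\'io, Theorem 3.3.1(1)) that $e = \tilde e_1+\cdots+\tilde e_\delta$, and then recombines using $\chi = m(\tilde\chi_1+\cdots+\tilde\chi_\delta)$ and the constancy of this sum on $F$-conjugacy classes. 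You instead work entirely over $F$ ``from the bottom'': the coefficient extraction $|G|\,a_h = \mathrm{tr}_F(L_{eh^{-1}})$ combined with $Re\cong V^{\,n}$ (which the paper does record in Section~2) gives $a_h = n\chi(h^{-1})/|G|$ directly, and the regrouping step is justified cleanly by observing that $g\mapsto\chi(g^{-1})$ is the character of the contragredient and hence constant on $F$-conjugacy classes --- which also settles that $L_i^{-1}$ is a well-defined $F$-conjugacy class, a point the paper leaves implicit. Your argument is self-contained (no appeal to the $\overline F$-formula, to Schur's splitting theorem, or to an external reference) and in fact reproves the algebraically closed case as a byproduct; the paper's argument is less elementary but makes explicit the relation $e = \sum_i\tilde e_i$ between the $F$-pci and its Galois orbit of $\overline F$-pci's, which fits the Schur-index framework developed in Section~2 and is used later. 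Both proofs are valid.
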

    
\begin{proof}
By the classical theorem of Schur (see Section \ref{Schur Index}), 
$\rho \otimes_{F}{\overline{F}}$ splits into $\delta$ distinct irreducible representations over $\overline{F}$ of degree
$mn$, and with the multiplicity $m$. Let
\begin{align*}
\displaystyle \rho \otimes_{F}{\overline{F}} = m(\bigoplus^{\delta}_{i = 1}\tilde \rho_{i}),
\end{align*}
where $\tilde \rho_{i}$'s are algebraically conjugates over $F$ and $m$ is the Schur index of $\tilde \rho_{i}$ with respect to $F.$
Let $\tilde \chi_{i}$ be the $\overline{F}$-character 
of $\tilde \rho_{i}.$ 
The pci corresponding to the representation $\tilde \rho_{i}$ is given by
$$\tilde e_{i} = \frac{mn}{|G|} \sum_{k = 1}^{s} {\tilde \chi_{i}(C_{{k}}^{-1})}C_{{k}}^*.$$
From the Theorem $3.3.1 (1)$ in \cite{Jespers-2016}, we have $e = \tilde e_{1} + \tilde e_{2} + \cdots +  \tilde e_{\delta}.$
So, $$e = \frac{mn}{|G|} \sum_{k = 1}^{s} \Big{(}\tilde \chi_{1} + \tilde \chi_{2} 
+\cdots +\tilde \chi_{\delta} \Big{)} (C_{{k}}^{-1})C_{{k}}^* = 
\frac{n}{|G|} \sum_{i = 1}^{r}{\chi(L_{{i}}^{-1})}L_{{i}}^*.$$
This completes the proof.
\end{proof}
\begin{cor}
Let $e'' = \sum _{x \in G} {\chi(x^{-1})x}$. 
Then $e'' = \{|G|/n\} e$, and 
${e''}^2 = \{|G|/n\}^2 e^2 = \{|G|/n\}^2 e = \{|G|/n\} e''$. 
So if we know $\chi$, then one can determine $n$. 
Therefore one can read the complete set of pci's of $F[G]$ from the $F$-character table.
\end{cor}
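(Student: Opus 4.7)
The plan is to recognize $e''$ as a rewriting of the pci formula of Theorem \ref{$F$-idempotents}. First I would group the sum $\sum_{x \in G} \chi(x^{-1}) x$ by $F$-conjugacy classes. Since $\chi$ is constant on each $F$-conjugacy class (by the very definition of $\sim_F$), the value $\chi(x^{-1})$ depends only on the class of $x$ and equals the common value $\chi(L_i^{-1})$ for every $x \in L_i$. Therefore
\begin{equation*}
e'' \;=\; \sum_{i=1}^{r} \sum_{x \in L_i} \chi(x^{-1})\, x \;=\; \sum_{i=1}^{r} \chi(L_i^{-1})\, L_i^{*},
\end{equation*}
which by Theorem \ref{$F$-idempotents} equals $\{|G|/n\}\, e$, giving the first identity $e'' = \{|G|/n\}\, e$.

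The chain of equalities for $(e'')^2$ then follows by squaring this identity and using that $e$ is a primitive central idempotent, so $e^2 = e$:
\begin{equation*}
(e'')^2 \;=\; \{|G|/n\}^2\, e^2 \;=\; \{|G|/n\}^2\, e \;=\; \{|G|/n\}\bigl(\{|G|/n\}\, e\bigr) \;=\; \{|G|/n\}\, e''.
\end{equation*}

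For the final claim, observe that $e''$ is fabricated purely from the values of $\chi$, hence from a single row of the $F$-character table. Multiplying $e''$ by itself inside $F[G]$ and comparing coefficients yields a unique scalar $\lambda \in F$ satisfying $(e'')^2 = \lambda\, e''$; this scalar is nonzero because $e \ne 0$ and $|G|/n \ne 0$ in $F$ (as $\mathrm{char}(F)$ is prime to $|G|$). By the previous chain, $\lambda = |G|/n$, so $n = |G|/\lambda$ and hence $e = \lambda^{-1} e''$ are both determined by $\chi$ alone. Running this procedure over every row of the $F$-character table recovers the complete set of pci's of $F[G]$.

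I do not expect any substantive obstacle, as the corollary is essentially an algebraic repackaging of Theorem \ref{$F$-idempotents}. The one point deserving care is confirming that $\chi(x^{-1})$ is constant on the $F$-conjugacy class of $x$; this reduces to the observation that the contragredient of an $F$-representation is an $F$-representation, so $x \sim_F y$ forces $x^{-1} \sim_F y^{-1}$, which is precisely what legitimises the notation $\chi(L_i^{-1})$ already employed in Theorem \ref{$F$-idempotents}.
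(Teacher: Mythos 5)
Your argument is correct and is essentially the one the paper intends (the corollary is left as an immediate consequence of Theorem \ref{$F$-idempotents}): regroup $e''$ over $F$-conjugacy classes to identify it with $\{|G|/n\}e$, square, and recover $n$ from the proportionality constant. Your closing remark that inversion preserves $F$-conjugacy (via the contragredient character) is exactly the point that legitimises the notation $\chi(L_i^{-1})$, so nothing is missing.
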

\section{Berman's Theorem}\label{berman's theorem}
Berman (see \cite{MR0072139}), described a method for constructing the irreducible representations of a finite solvable group over an algebraically closed field via the group algebra. For that, he inductively constructed all the pci's of the group algebra of a finite solvable group. In this section, we give a proof of Berman's theorem.
\subsection{A Proof of Berman's Theorem} 
For the proof of Berman's theorem, we need the following lemma.
\begin{lem}\label{lemma4.1}
    Let $G = \langle x | x ^{p} = 1 \rangle$ be the cyclic group of prime order $p$. Let $F$ be an algebraically closed field of characteristic either $0$ or coprime to $p.$ Let $\zeta$ be a primitive $p$-th root of unity. Then the pci's in $F[G]$ are 
    $e_{\zeta^ix},$ where $i$ runs over $0, 1, \dots , p-1.$ 
\end{lem}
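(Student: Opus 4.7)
The plan is to exhibit the $p$ elements $e_{\zeta^{i}x}$ as a complete orthogonal system of primitive central idempotents in $F[G]$. Since $F$ is algebraically closed with characteristic coprime to $p$, the group algebra $F[G]$ is semisimple by Maschke's theorem, and because $G$ is abelian every irreducible $F$-representation is one-dimensional. The linear characters $\chi_{i}:G\to F^{\times}$ given by $\chi_{i}(x)=\zeta^{-i}$ for $i=0,1,\dots,p-1$ exhaust the inequivalent irreducibles, so $F[G]$ carries exactly $p$ primitive central idempotents, and the task is reduced to identifying them with the stated elements.

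The quickest route is to apply Theorem \ref{$F$-idempotents} with reduced dimension $n=1$. Since $G$ is abelian the ordinary conjugacy classes are singletons, and since $F$ already contains $\zeta$, the $F$-conjugacy classes likewise reduce to singletons $\{x^{k}\}$. The formula then gives
\[
e_{\chi_{i}} \;=\; \frac{1}{p}\sum_{k=0}^{p-1}\chi_{i}(x^{-k})\,x^{k} \;=\; \frac{1}{p}\sum_{k=0}^{p-1}\zeta^{ik}\,x^{k} \;=\; e_{\zeta^{i}x},
\]
which is the desired conclusion. Alternatively, one can verify the required properties by hand using the elementary geometric-sum identity $\sum_{k=0}^{p-1}\zeta^{ak}=p$ when $a\equiv 0\pmod{p}$ and $0$ otherwise. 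This identity shows cleanly that $e_{\zeta^{i}x}^{\,2}=e_{\zeta^{i}x}$, that $e_{\zeta^{i}x}\,e_{\zeta^{j}x}=0$ for $i\ne j$ in $\{0,1,\dots,p-1\}$, and that $\sum_{i=0}^{p-1}e_{\zeta^{i}x}=1$ in $F[G]$.

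Finally, primitivity would follow from a dimension count. Having produced $p$ nonzero pairwise orthogonal central idempotents summing to $1$ inside a commutative semisimple $F$-algebra of total dimension $p$, each component $e_{\zeta^{i}x}F[G]$ has dimension at least $1$ while the components sum to $p$, so each is exactly one-dimensional and hence isomorphic to $F$. A central idempotent whose associated two-sided ideal is a field is automatically primitive. No substantive obstacle is expected: the computation is essentially the abelian discrete Fourier transform over $C_{p}$, and the only real care needed is to fix the character convention ($\chi_{i}(x)=\zeta^{-i}$ rather than $\zeta^{i}$) so that the exponents produced by the idempotent formula reproduce $e_{\zeta^{i}x}$ in exactly the form stated.
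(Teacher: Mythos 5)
Your proposal is correct, and it supplies exactly the ``standard calculation'' that the paper explicitly skips: the geometric-sum identity gives idempotency, pairwise orthogonality, and $\sum_{i}e_{\zeta^{i}x}=1$, and the dimension count ($p$ nonzero orthogonal central idempotents in a $p$-dimensional semisimple algebra) forces each to be primitive. The detour through the general pci formula is a valid but unnecessary alternative; the direct verification is the intended argument and is self-contained.
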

The proof of the lemma is a standard calculation and we skip it.

Let $G$ be a finite group and $H$  a normal subgroup of index $p$, a prime. Let 
$G/H = \langle {{\overline x}} \rangle$, and $x$ be a lift of $\bar{x}$ in $G$. Let $F$ be an
algebraically closed field of characteristic either $0$ or coprime to $|G|.$ Let $\overline{C(x)}$ be the conjugacy class sum of $x$ in $F[G]$. Since $\overline{C(x)}$ is a central element in $F[G],$ then $\overline{C(x)}^p$ is a central element of $F[H].$
    
Let $(\eta, W)$ be an irreducible $F$-representation 
of $H$ and $e_{\eta}$ be the pci of $\eta$ in $F[H].$ 
As a consequence of Clifford's theorem (see \cite{MR1503352}), if $\eta \ncong \eta^{x}$, then the induced representation $\eta \uparrow^{G}_{H}$ is an irreducible representation of $G$, and 
$\eta \uparrow^{G}_{H}\cong \eta^{x} \uparrow^{G}_{H} \cong \cdots \cong \eta^{x^{p-1}} \uparrow^{G}_{H} \cong \rho,$ say. It is easy to see that
$e_{\rho} = e_{\eta} + e_{\eta^{x}} + \dots + e_{\eta^{x^{p-1}}}.$
    
Now suppose that $\eta \cong \eta^{x}.$ Then $\overline{C(x)}^pe_{\eta}$ belongs to the center of $F[H]e_{\eta}.$ By Schur's lemma,
$\overline{C(x)}^pe_{\eta} = \lambda e_{\eta},$ where $\lambda \in F.$ We show below that
$x$ in $G - H$ may be chosen so that $\lambda \neq 0.$
    
Let $\mu$ be any $p$-th root of $\lambda,$ in $F.$ Let $c = {\overline{C(x)}}{e_{\eta} }/{\mu}$.
Then $c^p = e_{\eta}.$ Let $\zeta$ 
be a primitive $p$-th root of unity in $F$.  By the Lemma $(\ref{lemma4.1})$, $e_{{\zeta}^ic}e_{\eta}$ are $p$ mutually orthogonal central idempotents in $F[G],$ and $e_{\eta}$ is a sum of $e_{{\zeta}^ic}e_{\eta}$ in $F[G].$ Since $e_{\eta}$ can split into at most $p$ central idempotents, then $\eta\uparrow^G_H$ splits into $p$
distinct irreducible representations of $G$. In fact each of these representations are {\it extensions} of $\eta,$ that is, the $H$-action on the representation space $W$ extends to $p$ distinct   $G$-actions on the same vector space $W.$

It remains to show that $x$ in $G - H$ may be chosen such that $\lambda \neq 0.$ Suppose for all $x$ in $G - H$ we have $\overline{C(x)}^pe_{\eta} = 0.$ Let  $\rho$ be an extension of $\eta$, with corresponding
character $\chi_{\rho}$ and the pci $e_{\rho}$ in $F[G]$.
If $\chi_{\rho}$ vanishes on $G-H$, then
$$e_{\rho} = \frac{\textrm{deg}\eta}{|G|}\sum_{g\in G}
\chi_{\rho}(g^{-1})g = \frac{\textrm{deg}\eta}{p|H|}\sum_{h\in H}
\chi_{\eta}(h^{-1})h = \frac{1}{p}e_{\eta},$$
and this implies that $e_{\rho}$ is {\it not} an idempotent, a contradiction. This completes the proof of Berman's theorem.

We summarise the above discussion in the following theorem:    
\begin{thm}\label{Berman's theorem}
	Let $G$ be a finite group and $H$ be a normal subgroup of index $p$, a prime. 
	Let $G/H = \langle {{xH}} \rangle$, for some $x$ in $G$. 
	Let $F$ be an algebraically closed field of characteristic either $0$ or prime to the order of $G.$ Let $\eta$ be an irreducible representation of $H$ over $F$. 
	%We use the notation introduced above. 
	We distinguish two cases:
	\begin{itemize}
		\item[(1)] If $\eta \ncong \eta^{x}$, then $\rho \cong \eta\uparrow_H^G$ is irreducible,
		${\rho}\downarrow^{G}_{H} \cong  {\eta} \oplus {\eta^{x}} \oplus \cdots \oplus {\eta^{x^{p-1}}}$, and 
		$$e_{\rho} = e_{\eta} + e_{\eta^{x}} + \cdots + e_{\eta^{x^{p-1}}}.$$
		\item[(2)] If $\eta \cong \eta^{x},$ then $\eta$ extends to $p$ distinct irreducible representations $\rho_{0}, \rho_{1}, \dots ,$ $ \rho_{p-1}$ of $G$ over $F$, and $\eta\uparrow_H^G \cong \rho_{0} \oplus \rho_{1} \oplus \cdots \oplus \rho_{p-1}.$ Correspondingly, $e_{\eta} = e_{\rho_0} + e_{\rho_1} + \cdots + e_{\rho_{p-1}}.$ In the above notation, we may take $e_{\rho_i} = e_{{\zeta}^ic}e_{\eta}, i = 0,1, \dots, p-1.$    
	\end{itemize}    
\end{thm}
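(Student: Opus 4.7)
The plan is to build on Clifford's theorem: since $[G:H] = p$ is prime, the $G$-orbit of $\eta$ under conjugation has length either $p$ or $1$, and these are precisely Cases (1) and (2) of the theorem.

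For Case (1), I would first observe that $\eta \ncong \eta^x$ forces the $p$ conjugates $\eta, \eta^x, \ldots, \eta^{x^{p-1}}$ to be pairwise inequivalent (their $\langle x \rangle$-orbit has length dividing $p$ and, by hypothesis, is not a singleton). Frobenius reciprocity and the Mackey decomposition then give that $\rho := \eta\uparrow_H^G$ is irreducible with $\rho\downarrow_H^G \cong \eta \oplus \eta^x \oplus \cdots \oplus \eta^{x^{p-1}}$. The pci identity $e_\rho = \sum_{i=0}^{p-1} e_{\eta^{x^i}}$ then follows because the right-hand side is a $G$-invariant central idempotent of $F[H]$, hence central in $F[G]$, which acts as the identity on the induced module and annihilates every other simple $F[G]$-module.

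For Case (2), the main work, I would use the central element $\overline{C(x)} \in Z(F[G])$. Since $\eta \cong \eta^x$, conjugation by $x$ fixes $e_\eta$, so $e_\eta$ is already central in $F[G]$; therefore $\overline{C(x)}^p e_\eta$ lies in $Z(F[H] e_\eta) = F \cdot e_\eta$ by Schur's lemma, and may be written as $\lambda e_\eta$ for some $\lambda \in F$. Granting for the moment that $\lambda \ne 0$, I would choose $\mu \in F$ with $\mu^p = \lambda$ (possible because $F$ is algebraically closed), set $c = \overline{C(x)} e_\eta / \mu$, and verify $c^p = e_\eta$. Applying Lemma \ref{lemma4.1} to the cyclic subalgebra generated by $c$ in $Z(F[G] e_\eta)$ splits $e_\eta = \sum_{i=0}^{p-1} e_{\zeta^i c} e_\eta$ into $p$ orthogonal central idempotents of $F[G]$. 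A dimension count, using $\dim F[G] e_\eta = p \, (\deg \eta)^2$, shows that $e_\eta$ cannot split into more than $p$ central idempotents, so each $e_{\zeta^i c} e_\eta$ is primitive and corresponds to an extension of $\eta$ to $G$ on the same underlying space $W$.

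The principal obstacle is to guarantee that $\lambda \ne 0$ for some $x \in G - H$. I would argue by contradiction: if $\overline{C(x)}^p e_\eta = 0$ for every $x \in G - H$, then for any putative extension $\rho$ of $\eta$ one has $\rho(e_\eta) = I_W$, so $\rho(\overline{C(x)})^p = 0$; since $\rho(\overline{C(x)})$ is a scalar by Schur's lemma, that scalar must vanish, and taking traces gives $\chi_\rho(x) = 0$ for every such $x$. The pci formula of Theorem \ref{$F$-idempotents}, applied to $\rho$, then collapses to $e_\rho = (1/p) e_\eta$, whose square is $(1/p^2) e_\eta \ne e_\rho$, contradicting idempotence. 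This contradiction produces the required $x$ and completes the construction.
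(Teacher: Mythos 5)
Your proposal is correct and follows essentially the same route as the paper: Clifford/Mackey for case (1), and for case (2) the central element $c=\overline{C(x)}e_{\eta}/\mu$ with $c^{p}=e_{\eta}$, Lemma \ref{lemma4.1} to split $e_{\eta}$ into $p$ orthogonal central idempotents, and the identical contradiction argument (vanishing of $\chi_{\rho}$ on $G-H$ forcing $e_{\rho}=\tfrac{1}{p}e_{\eta}$, which is not idempotent) to secure $\lambda\neq 0$. The only difference is that you make explicit two steps the paper leaves implicit, namely the trace argument deducing $\chi_{\rho}(x)=0$ from $\rho(\overline{C(x)})^{p}=0$ and the dimension count bounding the number of central summands of $F[G]e_{\eta}$ by $p$.
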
   
\begin{cor}
	With the hypothesis in the Theorem $B$, for an irreducible character $\chi$ of $G,$ $\chi(x) \neq 0$ for some $x \in G - H$ if and only if $\chi |_{H}$ is an irreducible character of $H.$
\end{cor}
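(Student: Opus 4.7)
The plan is to use the dichotomy given by Berman's theorem directly. Let $\chi$ be an irreducible character of $G$, and let $\eta$ be an irreducible constituent of $\chi|_H$. By Clifford's theorem together with Theorem \ref{Berman's theorem}, exactly one of the following holds: (i) $\eta \ncong \eta^{x}$ and $\chi$ is the character of $\eta \uparrow_H^G$, in which case $\chi|_H = \chi_\eta + \chi_{\eta^{x}} + \cdots + \chi_{\eta^{x^{p-1}}}$ is a sum of $p$ distinct irreducible $H$-characters and is therefore reducible; or (ii) $\eta \cong \eta^{x}$ and $\chi = \chi_{\rho_i}$ for one of the $p$ extensions of $\eta$ to $G$, in which case $\chi|_H = \chi_\eta$ is irreducible. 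Thus ``$\chi|_H$ irreducible'' characterises exactly case (ii), and it suffices to prove that $\chi$ vanishes on $G - H$ in case (i) and is nonzero somewhere on $G - H$ in case (ii).

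For the direction ``$\chi|_H$ reducible $\Rightarrow \chi \equiv 0$ on $G - H$,'' I would apply the Frobenius formula for induced characters,
\[ \chi(g) \;=\; \frac{1}{|H|}\sum_{\substack{y\in G \\ y^{-1}gy\in H}}\chi_\eta(y^{-1}gy). \]
Since $H\trianglelefteq G$ has index $p$, for any $g \in G - H$ and any $y \in G$ the conjugate $y^{-1}gy$ lies in the coset $gH \neq H$, so the indexing set of the sum is empty and $\chi(g)=0$.

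For the converse ``$\chi|_H$ irreducible $\Rightarrow \chi(x)\neq 0$ for some $x \in G - H$,'' I would argue by contradiction, essentially recycling the last step of the proof of Theorem \ref{Berman's theorem}. Suppose $\chi(x) = 0$ for every $x \in G - H$. In case (ii) we have $\deg\chi = \deg\eta$ and $|G| = p|H|$, so the idempotent formula of Theorem \ref{$F$-idempotents} (specialised to the algebraically closed $F$) yields
\[ e_\chi \;=\; \frac{\deg\chi}{|G|}\sum_{g\in G}\chi(g^{-1})\,g \;=\; \frac{\deg\eta}{p\,|H|}\sum_{h\in H}\chi_\eta(h^{-1})\,h \;=\; \tfrac{1}{p}\,e_\eta. \]
Then $e_\chi^{2} = \tfrac{1}{p^{2}}e_\eta^{2} = \tfrac{1}{p^{2}}e_\eta = \tfrac{1}{p}e_\chi \neq e_\chi$, contradicting the idempotence of $e_\chi$.

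The argument uses only Clifford's theorem, Berman's theorem, the Frobenius induction formula, and the explicit pci formula already established; no extra machinery is required. The only real obstacle is bookkeeping with the normalisation constants in the pci formula and verifying that the splitting of $\chi|_H$ into $p$ distinct constituents in case (i) is genuinely nontrivial, both of which are immediate from the statements cited.
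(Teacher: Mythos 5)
Your proof is correct and follows essentially the same route as the paper: the corollary is stated there without its own proof, but both of your key steps --- the vanishing of a character induced from a proper normal subgroup on $G-H$, and the contradiction that $e_{\chi}=\tfrac{1}{p}e_{\eta}$ cannot be idempotent --- are exactly the ingredients already embedded in the paper's proof of Theorem \ref{Berman's theorem}. Nothing further is needed.
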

    \section{Extension of Berman's Theorem} \label{main theorem}
    %In this section, we state and prove two main theorems.\par
Let $H$ be a normal subgroup of prime index $p$ in $G.$  Let $F$ be a field of characteristic $0$ or prime to the order of $G.$ Let $\overline{F}$ be the algebraic closure of $F.$ Let $G/H = \langle{\overline x}\rangle.$ Let $x$ be a lift of $\overline{x}$ in $G$. Let $\eta$ be an irreducible $F$-representation of $H,$ $\psi$ be its corresponding character and $e_{\eta}$ be its corresponding pci in $F[H].$ Let $\overline{C(x)}$ denote the conjugacy class sum of $x$ in $F[G].$ Let $I(\eta) = F[H]e_{\eta}$. Let $Z$ denote the center of $I(\eta) = F[H]e_{\eta}.$  So, $\overline{C(x)}^pe_{\eta} = \lambda e_{\eta},$ where $\lambda \in Z.$\par
    
We have
  $$I(\eta)\otimes_{F}{Z} = F[H]e_{\eta} \otimes_{F}{Z} = Z[H]e_{\eta} \cong M_{n}(D) \oplus M_{n}(D) \oplus \dots \oplus M_{n}(D), (\delta\, \textrm{times}).$$ 
  Note that the above isomorphism is a $Z$-algebra isomorphism. Each summand $M_{n}(D)$ is a central simple algebra over $Z$.  In terms of representations, $\eta \otimes_{F}Z = \eta_1 + \eta_2 + \dots + \eta_{\delta},$ where $\eta_{i}$'s are inequivalent irreducible $Z$-representations of $G$.\par 
  
If $\eta^x \ncong \eta$, then like the algebraically closed case $\eta \uparrow^G_H$ is irreducible, and $\eta\uparrow^G_H \cong \eta^{x}\uparrow^G_H \cong \dots \cong \eta^{x^{p-1}}\uparrow^G_H \cong \rho$, say. Moreover, $e_{\rho} = e_{\eta} + e_{\eta^x} + \dots + e_{\eta^{x^{p-1}}}$.  So, from  now onwards we restrict our attention to the case $\eta^x \cong \eta$.\par
    
 Assume  that $\eta^x \cong \eta.$ Then $e_{\eta}$ is central in $F[G].$ If $e_{\eta}$ remains a pci in $F[G],$ we say $e_{\eta}$ {does \it not split} in $F[G],$ otherwise we say $e_{\eta}$ {\it splits} in $F[G].$\par
    
  Let $\Phi_{p}(X)$ denote the $p$-th cyclotomic polynomial. Let $\Phi_{p}(X) =  f_{1}(X) \dots f_{k}(X)$
  be the factorization into monic irreducible polynomials over $Z$. Then $X^{p} - 1 = f_{0}(X)f_{1}(X) \dots$ $f_{k}(X)$,
 where $f_{0}(X) = X-1$, is the factorization into monic irreducible polynomials over $Z$. By Proposition \ref{decomposition cyclotomic}, for
 $i = 1, 2,\dots, \delta$, the degree of $f_{i}(X)$'s are the same, say, $d$. If $\zeta$ is a root of $f_{i}(X)$, then all the roots of $f_{i}(X)$ are $\zeta, \zeta^{r_{2}}, \dots, \zeta^{r_{d}}$, and also the sequence $\{r_{1} = 1 , r_{2} , \dots, r_{d}\}$ is independent of $f_{i}(X)$ and the roots of $f_{i}(X)$. Now we shall state and prove the first main theorem.
  \subsection{First Main Theorem}
Suppose that $\eta^x \cong \eta$. We consider two cases:
  \begin{center}
  $(1)~\eta_{1}^x \ncong \eta_{1}$ \textrm{and} $(2)~\eta_{1}^x \cong \eta_{1}$.
  \end{center}
In case $(1)$, it is easy to see that $\overline{C(x)}^pe_{\eta} = 0$ for all $x$  in $G  - H$.
    
In case $(2)$, since  $\eta_{1}^x \cong \eta_{1}$,  there exists $y \in G - H$, $\overline{C(y)}^pe_{\eta} = \lambda e_{\eta}$, for some $\lambda \neq 0$. Without loss of generality, we take $y$ to be $x$. So, $\overline{C(x)}^pe_{\eta} = \lambda e_{\eta}$, for some $\lambda \neq 0$.
Suppose $Z$ contains a $p$-th root of $\lambda$, say $\mu$.
Let $c = {\overline{C(x)}e_{\eta}}/\mu$. So $c$ is a central element of $Z[G].$ Then $c^p = e_{\eta}.$ Consider $e_c = ( 1 + c + c^2 + \dots + c^{p-1})/p.$
Then $e_ce_{\eta}$ is a central idempotent in $Z[G].$\par
    
Let $\mu_1, \mu_2, \dots, \mu_p$, be the $p$ distinct $p$-th roots of $\lambda$ in $\overline F.$ Let $c_i = {\overline {C(x)}e_{\eta}}/\mu_i, i = 1, 2, \dots, p$ and 
$e_{c_i} = ( 1 + c_i + c_i^2 + \dots + c_i^{p-1})/p.$ Then $e_{c_i}e_{\eta}, i =1,2, \dots, p$ are mutually orthogonal central idempotents in ${\overline F}[G].$ So, in $\overline F$, $e_{\eta}$ splits into $p$ distinct central idempotents: $e_{c_i}e_{\eta}, i = 1, 2,\dots, p.$ Their sum is $e_{\eta}.$\par
    
As for $Z$ itself, the following three mutually exclusive cases can occur:
\begin{itemize}
\item[(A)] $\lambda$ has  no $p$-th roots in $Z$.
\item[(B)] $\lambda$ has two distinct $p$-th roots in $Z$, then $Z$ contains all $p$, $p$-th roots of $\lambda$, say, $\mu_1, \mu_2,\dots , \mu_p$.
In this case,  $\mu_2/\mu_1 = \zeta$, is a primitive $p$-th root of $1$. So, $\zeta$ lies  in $Z.$ Then $\mu_1\zeta^i, i = 0, 1, 2,\dots, p-1,$ are the distinct $p$-th roots of $\lambda$ in $Z.$
Since $Z$ is a field, $\lambda$ has at most $p$ distinct $p$-th roots in $Z$. So, 
$\{\mu_1, \mu_2,\dots, \mu_p\} = \{\mu_1\zeta^i, i = 0, 1, 2,\dots, p-1\}.$
\item[(C)] $\lambda$ has exactly one $p$-th root of $\lambda$, say $\mu$, in $Z.$
%Notice that, in this case, $Z$ does not contain any primitive $p$-th roots of $1.$         
 \end{itemize}
Using all the above notations, we state and prove the first main theorem:
\begin{thm}\label{First Theorem}
Let $G$ be a finite group, and $H$ a normal subgroup of prime index $p$ in
$G$. Let $G/H = \langle{\overline x}\rangle.$ Let $x$ be a lift of $\overline{x}$ in $G.$ Let $F$ be a field of characteristic $0$ or prime to the order of $G$. Let $\eta$ be an irreducible $F$-representation of $H,$ and $e_{\eta}$ be its corresponding pci in $F[H]$. 
%and $E := F(\tilde \psi_i),$ the common character field of $\tilde \psi_i$ over $F.$
Suppose that $\eta^x \cong \eta.$ Let $Z$ denote the center of $F[H]e_{\eta}.$ Then
\begin{itemize}
\item[(1)] If $\eta^{x}_1 \ncong \eta_1$, then $e_{\eta}$ does not split in $F[G].$
\item[(2)] If $\eta^{x}_1 \cong \eta_1$, then it follows that $\overline{C(x)}^{p}e_\eta = \lambda e_{\eta},$ where $\lambda \in Z - \{0\}.$ We consider three subcases:
\item[(A)] If $\lambda$ has no $p$-th root in $Z$, then $e_{\eta}$ does not split in $F[G].$ 
\item[(B)] If $\lambda$ has two distinct $p$-th roots in $Z$, then $e_{\eta}$ splits into $p$ pci's in $F[G].$ They are given by $e_{c_i}e_{\eta},  i =1,2, \dots,p$.
\item[(C)] If $\lambda$ has only one $p$-th root in $Z$, then $e_{\eta}$ splits into $1 + k$ pci's in $F[G].$ Let $\zeta_{ij}, j = 1,2, \dots, d$ be the roots of $f_{i}(X)$ in $\bar{F}$. Let $c_{ij} = \frac{\overline {C(x)}e_{\eta}}{\mu \zeta_{ij}},$
$$ e_{c_{ij}} = (1 + c_{ij} + c_{ij}^2 + \dots + c_{ij}^{p-1})/p, i = 1, 2, \dots, k ; j = 1,2, \dots, d$$
and 
$e_{f_{i}(X)} = \sum^{d}_{j = 1} e_{c_{ij}}e_{\eta}.$
Then $e_{\eta}$ splits into 
$e_{f_{i}(X)}, i =1,2, \dots,k,$ together with
$e_{c}e_{\eta}$.
\end{itemize}
\end{thm}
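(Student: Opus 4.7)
The plan is to pass to the algebraic closure $\overline{F}$, apply Berman's Theorem~\ref{Berman's theorem} componentwise to the Schur decomposition $e_\eta = \sum_{i=1}^{\delta} e_{\tilde\eta_i}$ in $\overline{F}[H]$, and then descend via the action of $\mathrm{Gal}(\overline{F}/F)$. Here $\tilde\eta_1,\dots,\tilde\eta_\delta$ are the inequivalent irreducible $\overline{F}$-representations of $H$ lying over $\eta$, and $\mathrm{Gal}(\overline{F}/F)$ acts on them simply transitively through its quotient $\mathrm{Gal}(Z/F)$. The key preliminary observation is that conjugation by $x$ is $F$-linear and therefore commutes with the Galois action on $\overline{F}[H]$; hence $\tilde\eta_i^x \cong \tilde\eta_i$ holds either for every $i$ (matching case~(2)) or for no $i$ (matching case~(1)).

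In case~(1), Berman's theorem says each $\tilde\eta_i$ induces irreducibly to $G$, producing the $\delta/p$ pci's $e_{\tilde\eta_i} + e_{\tilde\eta_i^x} + \cdots + e_{\tilde\eta_i^{x^{p-1}}}$ in $\overline{F}[G]$. Because the commuting $\mathrm{Gal}(\overline{F}/F)$- and $x$-actions on $\{e_{\tilde\eta_i}\}$ make these $\delta/p$ pci's a single Galois orbit, their sum $e_\eta$ is Galois-fixed and is itself a pci in $F[G]$, so $e_\eta$ does not split.

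In case~(2), a first reduction shows that since $x$ fixes each $e_{\tilde\eta_i}$ and acts $Z$-linearly on $Z[H]e_\eta$, it fixes the center $Z$ pointwise, so $Z$ is contained in the center of $F[G]e_\eta$. All of the elements $\mu$, $c_{ij}$, $e_{c_{ij}}$, $e_{f_i(X)}$ appearing in the statement thus automatically lie in $F[G]$ as soon as they are well-defined over $Z$. Applying Berman to each $\tilde\eta_i$ splits $e_{\tilde\eta_i}$ in $\overline{F}[G]$ into $p$ pci's indexed by the $p$-th roots of the scalar $\lambda_i$ determined by $\overline{C(x)}^{p}\, e_{\tilde\eta_i} = \lambda_i\, e_{\tilde\eta_i}$, where the $\lambda_i$ are the $\delta$ Galois conjugates of $\lambda \in Z$ (nonzero after choosing $x$ as in Berman's original argument, since $\lambda$ is $x$-fixed because $\overline{C(x)}$ is central). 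We thus obtain $p\delta$ pci's in $\overline{F}[G]$ dividing $e_\eta$, parametrized by pairs $(i,\mu_{ij})$ with $\mu_{ij}^{p} = \lambda_i$. The $F$-pci's dividing $e_\eta$ correspond bijectively to $\mathrm{Gal}(\overline{F}/F)$-orbits on this set, and the three subcases are exactly the three ways $X^{p} - \lambda$ can factor over $Z$: in (A) the polynomial is irreducible, so Galois acts transitively on all $p\delta$ pairs and $e_\eta$ remains a single pci; in (B) it splits completely in $Z$, so the $\mu$-coordinate is Galois-invariant and orbits run along the $i$-coordinate alone, producing the $p$ pci's $e_{c_i}e_\eta$; in (C) only the trivial root $\mu \in Z$ is Galois-fixed, yielding $e_c e_\eta$, while the remaining roots $\mu\zeta_p^{\ell}$ are partitioned into orbits mirroring the factorization $\Phi_p(X) = f_1(X)\cdots f_k(X)$ of Proposition~\ref{decomposition cyclotomic}, producing the $k$ additional pci's $e_{f_i(X)}$.

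The main obstacle is subcase~(C): one must verify that each $\mathrm{Gal}(\overline{F}/F)$-orbit on the pairs $(i,\mu\zeta_p^{\ell})$ collects precisely $\delta d$ terms corresponding to the $d$ roots of a single $f_i$ spread across the $\delta$ Galois-conjugate values of $\mu$, and that the resulting orbit-sum is exactly $\sum_{j=1}^{d} e_{c_{ij}} e_\eta$. This rests on Proposition~\ref{decomposition cyclotomic}: the factors $f_1,\dots,f_k$ of $\Phi_p$ share a common degree $d$ and a common exponent sequence $\{r_1,\dots,r_d\}$, which guarantees that the $\mathrm{Gal}(Z(\mu,\zeta_p)/Z)$-orbit structure on the non-trivial $p$-th roots of $\lambda_i$ is uniform as $i$ varies, so that the orbit-sum formula matches the claimed $e_{f_i(X)}$ cleanly.
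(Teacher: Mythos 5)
Your argument is correct in outline and takes a genuinely different route from the paper's. The paper works directly over the field $Z$ (the centre of $F[H]e_\eta$): it writes down the candidate idempotents $e_{c_i}e_\eta$ and $e_{f_i(X)}$ explicitly, checks rationality of the latter via Newton's identities, and gets primitivity from a counting argument. You instead pass to the algebraic closure (better: to the finite Galois splitting extension $F(\zeta_{|G|})$, to avoid any fuss with profinite groups), apply Theorem~\ref{Berman's theorem} to each constituent $\tilde\eta_i$, and invoke the descent principle that the pci's of $F[G]$ below $e_\eta$ are exactly the Galois-orbit sums of the pci's of $\overline F[G]$ below $e_\eta$ --- the same Theorem 3.3.1 of Jespers--del R\'{\i}o that the paper itself uses for Theorem~\ref{$F$-idempotents}, so this is fair game. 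Your route buys two things the paper leaves implicit: primitivity of each candidate is automatic (an orbit sum is always primitive), and subcase (A) reduces to the classical fact that for $p$ prime $X^p-\lambda$ is irreducible over $Z$ exactly when $\lambda\notin Z^p$, which forces transitivity on all $p\delta$ pairs. Cases (1), (A) and (B) are handled correctly, as is the preliminary ``all or nothing'' observation about $\tilde\eta_i^x\cong\tilde\eta_i$.

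The one delicate point is the one you yourself flag in subcase (C), and the appeal to Proposition~\ref{decomposition cyclotomic} does not quite close it. The orbit of the pair $(1,\mu\zeta_{i1})$ is $\{(\tau(1),\,\sigma_{\tau(1)}(\mu)\,\tau(\zeta_{i1}))\}_{\tau}$; over the component labelled $l$ the second coordinates that occur are $\sigma_l(\mu)$ times the roots of $\sigma_l(f_i)$, not of $f_i$ itself. Since $Z/F$ is Galois and $\mathrm{Gal}(Z/F)$ may permute the factors $f_1,\dots,f_k$ of $\Phi_p$ nontrivially (e.g.\ $F=\mathbb{Q}$, $Z=\mathbb{Q}(\sqrt{5})$, $p=5$, where the two quadratic factors of $\Phi_5$ are swapped), the orbit is in general the ``twisted'' set $\bigcup_l\{l\}\times\sigma_l(\mu)\cdot\{\text{roots of }\sigma_l(f_i)\}$ rather than the product set implicit in $\sum_j e_{c_{ij}}e_\eta$ with a single fixed $i$. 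The common degree and exponent sequence do guarantee that every nontrivial orbit has size $\delta d$ and hence that the count $1+k$ is correct, but to identify the orbit sum with the stated formula one must either know that each $f_i$ has coefficients in $F$, or reinterpret the power sums $\sum_j\zeta_{ij}^{-r}\in Z$ as elements of the subalgebra $Z\subseteq F[H]e_\eta$, so that they act as $\sigma_l(\cdot)$ on the $l$-th component. The paper's own Newton's-identities step carries exactly the same ambiguity, so this is a point to make precise rather than a defect peculiar to your argument; within your orbit framework the fix is immediate once the orbit is written out as above.
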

\begin{proof}
$(1)$ As $\eta^{x}_1 \ncong \eta_1$ then $\eta^{x}_i \ncong \eta_i$, for all $i = 1,2, \dots, \delta$. It is easy to see that $\delta$ is a multiple of $p$. Let $\delta = lp$ where $l$ is a positive integer. Since $\eta^{x}_i \ncong \eta_i$, $\eta_{i} \uparrow^G_H$ is irreducible ${Z}$-representation of $G$, say, $\rho_i$. The pci $e_{\rho_i}$ correspoding to $\rho_i$ is:
$$e_{\rho_i} = e_{\eta_i} + e_{\eta^x_i}+ \dots + e_{\eta^{x^{p-1}}_i}.$$
It is easy to see that 
$e_{\eta} = e_{\rho_1} + e_{\rho_2} + \dots + e_{\rho_l}$
and $e_{\rho_i}$'s are algebraically conjugates over $F$. Therefore, $e_{\eta}$ does not split in $F[G]$. This completes the proof of $(1)$.
        
$(2)$ It is easy to show that if $\eta^{x}_1 \cong \eta_1$, then it follows that  $\overline{C(x)}^{p}e_\eta = \lambda e_{\eta},$ where $\lambda \in Z - \{0\}.$
        
In case $(A)$, $e_{\eta}$ does not split in $Z[G]$, therefore, does not split in $F[G].$
        
In case $(B)$, we see that in $Z[G],$ $e_{\eta}$ spilts into $p$ distinct central idempotents.
%Let $c_{i} = \frac{\overline {C(x)}e_{\eta}}{\mu_i}$ and $e_{c_{i}} = (1 + c_{i} + c_{i}^2 + \dots + c_{i}^{p-1})/p, i = 1, 2, \dots, p.$
They are given by 
$$e_{c_{i}}e_{\eta}, i = 1, 2, \dots, p.$$
But $Z$ is contained in $F[H]$, so each $e_{c_{i}}e_{\eta}$ is defined in $F[G],$ and therefore $e_{\eta}$ splits into $p$ distinct central idempotents in $F[G]$. But $e_{\eta}$ splits into at most $p$ distinct pci's in $F[G].$ Therefore, $e_{\eta}$ splits into $p$ pci's $e_{c_{i}}e_{\eta}, i = 1, 2, \dots, p,$ in $F[G]$.
%Therefore, $e_{\eta}$ spilts into $p$ distinct pcis in $F[G]$.
        
In case $(C)$, $Z$ contains exactly one $p$-th root $\mu$ of $\lambda$.
Then the Newton's identities expressing the power symmetric functions into elementary symmetric functions, it is easy to see that 
$$e_{c_{ij}}e_{\eta}, j = 1, 2, \dots, d$$
add to an expression which has coefficients in $Z.$ Since $Z$ is in $F[H],$ these elements lie in $F[G].$ We denote this sum by $e_{f_{i}(X)}, i =1,2, \dots,k.$ Let 
$$c = \frac {\overline {C(x)}e_{\eta}}{\mu}~\textrm{and}~e_{c} =  (1 + c + c^2 + \dots + c^{p-1})/p.$$
One can easily show that $e_{f_{i}(X)}, i =1,2, \dots,k,$ together with
$e_{c}e_{\eta}$, we get $1 + k$ pci's, in $F[G],$  and their sum is $e_{\eta}.$ This completes the proof of the theorem.
\end{proof}
\section{Decompositon of Induced Representations}\label{Induced Representation}
Let $G$ be a finite group and $H$ be a subgroup in $G$. Let $F$ be a field of characteristic $0$ or prime to the order of $G.$ Let $\eta$ be an irreducible $F$-representation of $H$, $\psi$ be its character and $e_{\eta}$ be its corresponding pci in $F[H]$. Let $V$ be the representation space of $\eta$. Let $I(\eta)$ be the minimal $2$-sided ideal of $F[H]$ corresponding to $\eta$, that is, $I(\eta) = F[H]e_{\eta}$. Let $D$ be the $H$-centraliser of $\eta$. Then $I(\eta) \cong M_n(D)$. So, $\textrm{dim}_F I(\eta) = n^2\textrm{dim}_F D$. Let $Z$ be the center of $I(\eta) = F[H]e_{\eta}$, which is equal to the center of $D$. Let $[Z: F] = \delta$. Let $m$ be the Schur index of $\eta$. So, $\textrm{dim}_ZD = m^2, \textrm{dim} _FD = \delta m^2, \textrm{dim}_F I(\eta) = \delta n^2m^2$.

Let $\phi: F[H]\longrightarrow F[G]$ be the canonical inclusion.
Let $J_1, J_2, \dots ,J_r$ be the minimal $2$-sided ideals of $F[G]$, each of whose intersection with $\phi(I(\eta))$ is non-zero.
So the module induced from $I(\eta)$ is $J_1 + J_2 + \dots + J_r$, as 
the module induced from $F[H]$ is $F[G]\otimes_{F[H]} F[H] = F[G]$.
Note that $I(\eta) = nV$ as $F[H]$-modules. So the induced module of $V$ is contained in the induced module of $I(\eta)$, i.e., $J_1 + J_2 + \dots + J_r$, so realizable in $F[G]$.

Now we assume that $H$ is a normal subgroup of prime index $p$ in $G.$ Let $G/H = \langle{\overline x}\rangle.$ Let $x$ be a lift of $\overline{x}$ in $G$. Let $\eta$ be an irreducible $F$-representation of $H$ such that $\eta^x \cong \eta$. 
%Let $D$ be the $H$-centraliser of $\eta$. Let $I(\eta) =  F[H]{e_{\eta}}$ be the minimal 2-sided ideal of $F[H]$ corresponding to $\eta$ and also $I(\eta) \cong M_n(D)$ . 
%Recall that $Z$ is the center of $I(\eta) = F[H]e_{\eta}$ and $I(\eta) \cong M_n(D)$. 
Let $\overline{C(x)}$ denote the conjugacy class sum of $x$ in $F[G].$  So, $\overline{C(x)}^pe_{\eta} = \lambda e_{\eta},$ where $\lambda \in Z.$
%
%Let $e$ be a primitive idempotent in $F[H]e_{\eta}$ such that minimal left ideal $F[H]e$ in $F[H]$ corresponds to $\eta$.  Then $D = eF[H]e$.
%
%Let $e \in F[H]e_{\eta}$ be an idempotent of $F[H]$ such that minimal left ideal $F[H]e$ corresponds to $\eta$. Let $V$ be the representation space corresponding to $\eta$. Then $V = F[H]e$. Note that the ring of $F[H]$-endomorphisms of left $F[H]$-module $F[H]e$ is $eF[H]e$. As $D$ is the $H$-centraliser of $\eta$ then $eF[H]e = D$.
Then 
$$I(\eta)\otimes_{F}{Z} = F[H]e_{\eta} \otimes_{F}{Z} = Z[H]e_{\eta} \cong M_{n}(D) \oplus M_{n}(D) \oplus \dots \oplus M_{n}(D), (\delta\, \textrm{times}).$$ 
%Note that the above isomorphism is a $Z$-algebra isomorphism. 
Each summand $M_{n}(D)$ is a central simple algebra over $Z$. 

In terms of representations, $\eta \otimes_{F}Z = \eta_1 + \eta_2 + \dots + \eta_{\delta},$
where $\eta_{i}$'s are inequivalent irreducible $Z$-representations of $G$. 
In terms of pci's, $e_{\eta} = e_{\eta_1} + e_{\eta_2}+ \dots + e_{\eta_\delta}$ is the splitting of $e_{\eta}$ into pci's in $Z[G]$. So, each $Z[H]e_{\eta_i}$ is abstractly isomorphic to $M_{n}(D)$ as $Z$-algebra.  Let $I(\eta_i) = Z[H]e_{i}$ be the minimal $2$-sided ideal of $Z[H]$ corresponding to $\eta_i$. Let $I(\eta)\uparrow^{G}_{H}$ be  the induced ideal in $F[G]$ from $I(\eta)$ and $I(\eta_i)\uparrow^{G}_{H}$ be the induced ideal in $Z[G]$ from $I(\eta_i)$. Therefore, $I(\eta)\uparrow^{G}_{H} = F[G]{e_{\eta}}$ and $I(\eta_i)\uparrow^{G}_{H} = Z[G]{e_{\eta_i}}$.
\subsection{Second Main Theorem} 
We now state and prove the second main theorem.

\begin{thm} \label{Second Theorem}
Let $G$ be a finite group, and $H$ a normal subgroup of prime index $p$ in
$G$. Let $G/H = \langle{\overline x}\rangle.$ Let $x$ be a lift of $\overline{x}$ in $G.$ Let $F$ be a field of characteristic $0$ or prime to the order of $G$. Let $\eta$ be an irreducible $F$-representation of $H,$ and $e_{\eta}$ be its corresponding pci in $F[H]$. 
Suppose that $\eta^x \cong \eta.$ Let $Z$ denote the center of $I(\eta) = F[H]e_{\eta}.$ Then
\begin{itemize}
\item[(1)] If $\eta^{x}_1 \ncong \eta_1$, then $\eta \uparrow^{G}_{H}$ is either irreducible or equivalent to $p\rho,$ where $\rho$ is the unique extension of $\eta$ to $G.$ 
\item[(2)] If $\eta^{x}_1 \cong \eta_1$, then it follows that $\overline{C(x)}^{p}e_\eta = \lambda e_{\eta},$ where $\lambda \in Z - \{0\}.$ Then we consider three subcases:
\item[(A)] If $\lambda$ has no $p$-th root in $Z$, then $\eta \uparrow^{G}_{H}$ is either irreducible or equivalent to $p\rho,$ $\rho$ is the unique extension of $\eta$ to $G.$ 
\item[(B)] If $\lambda$ has two distinct $p$-th roots in $Z$, then $\eta$ extends to $p$ distinct irreducible $F$-representations of $G$, say, $\rho_{0}, \rho_{1}, \dots, \rho_{p-1}$. Then $\eta \uparrow^{G}_{H} \cong \rho_{0} \oplus \rho_{1} \oplus \dots \oplus \rho_{p-1}.$ 
\item[(C)] If $\lambda$ has only one $p$-th root in $Z$, then  $\eta\uparrow^{G}_{H}$ decomposes into $1 + k$ distinct irreducible $F$-representations of $G$. If $\rho_{0}, \rho_{1}, \dots, \rho_{k}$ be the $1 + k$ distinct irreducible $F$-representations of $G$ appear in $\eta\uparrow^{G}_{H}$ then $\eta \uparrow^{G}_{H} \cong \rho_{0} \oplus s(\rho_{1} \oplus \dots \oplus \rho_{k})$, where $\rho_{0}$ is the unique extension of $\eta$ and $s$ divides  g.c.d.$(m, d)$, where $m$ is the Schur index of $\eta$ and d is the common degree of irreducible factors of $\Phi_{p}(X)$ over $Z.$
\end{itemize}
\end{thm}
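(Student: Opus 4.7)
The overarching principle is that the decomposition of $\eta\uparrow_H^G$ is dictated by the decomposition of the idempotent $e_\eta$ in $F[G]$ provided by Theorem \ref{First Theorem}. Since $F[H]e_\eta\cong nV$ as $F[H]$-modules, where $n$ is the reduced dimension of $\eta$ and $V$ is its representation space, inducing gives $F[G]e_\eta\cong n\cdot(\eta\uparrow_H^G)$ as $F[G]$-modules. Hence the irreducible $F$-constituents of $\eta\uparrow_H^G$ are precisely those $\rho$ whose pci $e_\rho$ appears in the decomposition of $e_\eta$ in $F[G]$, and the multiplicity of such a $\rho$ equals $N_\rho/n$, where $N_\rho$ is the reduced dimension of $\rho$.

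The second tool is the Mackey-Clifford identity
\[ (\eta\uparrow_H^G)\downarrow_H^G \;\cong\; \bigoplus_{i=0}^{p-1}\eta^{x^i} \;\cong\; p\eta, \]
which uses $\eta^x\cong\eta$. Writing $\eta\uparrow_H^G = \bigoplus_j s_j\rho_j$ and $\rho_j\downarrow_H^G = e_j\eta$ (by Clifford, since every $G$-conjugate of $\eta$ is equivalent to $\eta$), we obtain $\sum_j s_je_j = p$, and Frobenius reciprocity gives $s_j\dim_F\Delta_j = e_j\dim_F D$, where $D$ and $\Delta_j$ denote the centralizers of $\eta$ and $\rho_j$ respectively.

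For cases (1) and (2)(A) of Theorem \ref{First Theorem}, $e_\eta$ is a single pci in $F[G]$, so $\eta\uparrow_H^G = s\rho$ for a unique irreducible $\rho$. Then $s\cdot e = p$, and primality of $p$ forces $(s,e)\in\{(1,p),(p,1)\}$, giving the two stated alternatives: either $\eta\uparrow_H^G$ is irreducible with $\rho\downarrow_H^G = p\eta$, or $\eta\uparrow_H^G = p\rho$ with $\rho\downarrow_H^G = \eta$, i.e., $\rho$ is the unique extension of $\eta$. For case (2)(B), Theorem \ref{First Theorem} produces $p$ distinct pci's $e_{c_i}e_\eta$ in $F[G]$, all defined over $Z$ since $\zeta\in Z$. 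Each corresponds to an irreducible $\rho_i$ such that $\rho_i\otimes_F Z$ is a Galois orbit of $\delta$ extensions of the various $\eta_j$'s; a dimension count gives $\dim_F\rho_i = \delta\dim_Z\eta_j = \dim_F\eta$, so every $\rho_i$ is an extension of $\eta$ and appears with multiplicity one in $\eta\uparrow_H^G$.

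The principal obstacle is case (2)(C). The idempotent $e_ce_\eta$ yields the unique extension $\rho_0$ of $\eta$ with multiplicity one, by the same argument as in case (B) applied to the single $p$-th root $\mu\in Z$. For each $j=1,\ldots,k$, the pci $e_{f_j(X)}$ produces an irreducible $\rho_j$. Since every primitive $p$-th root of unity generates the same degree-$d$ extension $Z(\zeta)$ of $Z$, one has $Z(\zeta_{j,1}) = Z(\zeta)$ for all $j$, and a Schur-Wedderburn analysis of the $Z$-irreducible constituents $\sigma_{j,i}$ of $\rho_j\otimes_F Z$ produces $\dim_F\rho_j = (m_j/m)\cdot d\cdot\dim_F\eta$, where $m_j$ is the Schur index of $\rho_j$. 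Frobenius reciprocity then gives $s_j = m/m_j$ and $e_j = m_jd/m$; integrality of $s_j$ and $e_j$ forces $s_j\mid\gcd(m,d)$. The final delicate step—that $s_j$ is independent of $j$—will follow from noting that the central simple algebras underlying the $\rho_j$ all arise from the single tensor product $D\otimes_Z Z(\zeta)$, so their Brauer classes (and hence Schur indices $m_j$) coincide. Making this symmetry argument rigorous, and verifying compatibility of the pci decomposition in Theorem \ref{First Theorem} with the induced-module decomposition in this subcase, is the main technical hurdle.
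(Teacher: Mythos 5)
Your proposal is correct and follows essentially the same route as the paper: the constituents of $\eta\uparrow_H^G$ are read off from the splitting of $e_\eta$ in $F[G]$ given by Theorem \ref{First Theorem}, and the hard case (2)(C) is settled exactly as in the paper by the Wedderburn decomposition of $D\otimes_Z Z[X]/(\Phi_p(X))$ into isomorphic summands $M_s(D_1)$, yielding $s=m/m_1$ with $s\mid m$ and $s\mid d$. Your Mackey--Frobenius counting ($\sum_j s_je_j=p$, $s_j\dim_F\Delta_j=e_j\dim_F D$) merely makes explicit the dimension bookkeeping the paper leaves implicit in cases (1), (2)(A), (2)(B), and the ``symmetry argument'' you flag as the remaining hurdle is precisely the paper's observation that $\mathbb{Z}_p^*$ permutes the summands $D\otimes_Z Z[X]/(f_i(X))$, forcing all $D_i$ to be isomorphic and the same $s$ to work throughout.
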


\begin{proof}
$(1)$ As $\eta^{x}_1 \ncong \eta_1$ then by Theorem~\ref{First Theorem}, $e_{\eta}$ does not split in $F[G]$. So, $I(\eta)\uparrow^{G}_{H} = F[G]{e_{\eta}}$ is a minimal $2$-sided ideal in $F[G]$. Consequently, $\eta \uparrow^G_H$ is either irreducible or equivalent to $p(\rho)$, where $\rho$ is the unique extension of $\eta$.
This completes the proof of $(1)$.
	
$(2)$ It is easy to show that if $\eta^{x}_1 \cong \eta_1$, then it follows that  $\overline{C(x)}^{p}e_\eta = \lambda e_{\eta},$ where $\lambda \in Z - \{0\}.$
	
In case $(A)$, if $\lambda$ has no $p$-th root in $Z$, then by Theorem~\ref{First Theorem}, $e_{\eta}$ does not split  in $F[G]$. So, $I(\eta)\uparrow^{G}_{H} = F[G]{e_{\eta}}$ is a minimal two-sided ideal in $F[G]$. Consequently, $\eta \uparrow^G_H$ is either irreducible or equivalent to $p(\rho)$, where $\rho$ is the unique extension of $\eta$.
	
In case $(B)$, $\lambda$ has two distinct $p$-th roots in $Z$, then by Theorem~\ref{First Theorem}, $e_{\eta}$ splits into $p$ pci's in F[G]. So $I(\eta)\uparrow^{G}_{H} = F[G]{e_{\eta}}$ is direct sum of $p$ minimal $2$-sided ideals in $F[G]$. Consequently, $\eta \uparrow^G_H$ is direct sum of $p$ distinct extensions of $\eta$, say, $\rho_{0}, \rho_{1}, \dots, \rho_{p-1}$ and $\eta \uparrow^{G}_{H} \cong \rho_{0} \oplus \rho_{1} \oplus \dots \oplus \rho_{p-1}.$

In case $(C)$, $Z$ contains exactly one $p$-th root of $\lambda$. Since $\eta_1^x \cong \eta_1$, $\overline{C(x)}^pe_{\eta_1} = \lambda_1e_{\eta_1}$ for some $\lambda_1 \in Z - \{0\}$ and there exist only one $\mu_1$ in $Z$ such that $\mu_1^p = \lambda_1$. Then
\begin{align*}
I(\eta_{1})\otimes_{Z[H]} Z[G] & \cong M_n(D)\otimes_{Z[H]} Z[G]\\
& = M_n(D)\otimes_{Z}{Z[G/H]}\\
& = M_n(D)\otimes_{Z} {Z[X]/(X^p -1)}\\
& = M_n(D)\otimes \big{(}{Z[X]/(X -1) \oplus Z[X]/(\Phi_p(X)}\big{)} \big{)}\\
& = M_n(D) \oplus M_n(D)\otimes_Z \big{(}Z[X]/(\Phi_p(X)\big{)}\big{)}\\
& = M_n(D) \oplus M_n\big{(}D\otimes_Z Z[X]/(\Phi_p(X)\big{)}\big{)}.
\end{align*}
%where $\zeta$ is a primitive $p$-th root of unity.
 
Also, 
\begin{align*}
& M_n(D) \oplus  M_n\big{(}D \otimes _Z Z[X]/(\Phi_p(X)\big{)}\big{)}\\ 
& \cong M_n (D) \oplus M_n \Big{(}D\otimes_Z \frac{Z[X]}{(f_1(X))} \oplus D\otimes_Z \frac{Z[X]}{(f_2(X))} \oplus \dots \oplus D\otimes_Z \frac{Z[X]}{(f_k(X))}\Big{)}\\
& \cong M_n (D) \oplus M_n (M_s(D_1)) \oplus M_n (M_s(D_2)) \oplus \dots \oplus  M_n (M_s(D_k))\\
& \cong M_n(D) \oplus M_{ns}(D_1) \oplus   M_{ns}(D_2) \oplus \dots \oplus M_{ns}(D_k).
\end{align*} 
Note that $\mathbb{Z}_p^*$ permutes the summands in $D\otimes_Z \frac{Z[X]}{(f_1(X))} \oplus D\otimes_Z \frac{Z[X]}{(f_2(X))} \oplus \dots \oplus D\otimes_Z \frac{Z[X]}{(f_k(X))}$, so all summands are abstractly isomorphic, and so all $D_i$'s are abstractly isomorphic,
and the same $s$  works for all $D_i$'s.

So if $V  \cong D^n$ is an $F$-irreducible representation space for $H$, then its induced $G$-representation space
splits into $G$-irreducible representation spaces $\cong V \oplus s (V_1 \oplus V_2 \oplus \dots \oplus V_k)$. If $\rho_{0}, \rho_{1}, \dots, \rho_{k}$ are the $1 + k$ distinct irreducible $F$-representations of $G$ appear in $\eta\uparrow^{G}_{H}$ then $\eta \uparrow^{G}_{H} \cong \rho_{0} \oplus s(\rho_{1} \oplus \dots \oplus \rho_{k})$.
	
We have $[D : Z] = m^2$. $D_1$ is contained in $D \otimes_Z Z(\zeta)$. The latter is a semisimple algebra. $D_1$ is a division ring contained in $D \otimes_Z Z(\zeta)$. Let $Z_1$ be the centre of $D_1$. It contains $Z(\zeta)$. So $[D_1 : Z_1]$ divides $[D\otimes_Z Z(\zeta): Z(\zeta)]$. dim$_Z(D)$ = dim$_{Z(\zeta)}(D \otimes_{Z}(Z(\zeta)))$. Note that $Z_1 = Z(\zeta)$. Therefore, $m^2 = \textrm{dim}_{Z_1}(D \otimes_{Z}(Z_1)) = \textrm{dim}_{Z_1}(M_s(D_1)) = s^2m_1^2$, where $m_1$ is the Schur index of $D_1$. So, we get $m^2 = s^2m_1^2$. Therefore $s^2$ divides $m^2$. This implies that $s$ divides $m$. Moreover, $m_1 = m/s$. It is clear that $s$ divides $d$, and hence $s$ divides g.c.d.$(m, d)$. This completes the proof of the theorem.
\end{proof}

\begin{remark}
%{\it Remark $1$:} 
Let $G$ be a finite group, and $H$ a normal subgroup of prime index $p$ in
$G$. Let $G/H = \langle{\overline x}\rangle.$ Let $x$ be a lift of $\overline{x}$ in $G.$ Let $F$ be a field of characteristic $0$ or prime to the order of $G$. Let $\eta$ be an irreducible $F$-representation of $H,$ and $e_{\eta}$ be its corresponding pci in $F[H]$. 
If $\eta^x \ncong \eta$ then $\eta \uparrow^G_H$ is irreducible, say, $\rho$ and also if $F[H]e_{\eta} \cong M_{n}(D)$ then $F[G]e_{\rho} \cong M_{np}(D)$.
\end{remark}
\begin{remark}
Now we make some comments on Theorem~\ref{Second Theorem}. We use the notation as in Theorem~\ref{Second Theorem}.
\begin{enumerate}
	\item $\eta^x_1 \ncong \eta_1$.\par
	\noindent
	In this case $\eta \uparrow^G_H$ is either irreducible  or equivalent to $p\rho$, where $\rho$ is the unique extension of $\eta$.\par 
	
	\item[*] In case $\eta \uparrow^G_H$ is irreducible, $\rho$, say, then the Scur index of $\rho$ is $mp$.
	\item[*] In case $\eta \uparrow^G_H$ is equivalent to $p\rho$, where $\rho$ is the unique extension of $\eta$, then the Schur index of $\rho$ is $m$.
	\item $\eta^x_1 \cong \eta_1$. 
	%Then it follows that $\overline{C(x)}^{p}{e_{\eta}} = \lambda e_{\eta}$, where $\lambda \in Z - \{0\}$.
	\item[(A)] If $\lambda$ has no $p$-th roots in $Z$, then $\eta \uparrow^G_H$ is either irreducible  or equivalent to $p\rho$, where $\rho$ is the unique extension of $\eta$. 
	%Let $\tilde \rho$ be an irreducible $\bar{F}$-representation appears in $\eta \uparrow^G_H$, and $\chi_{\tilde \rho}$ be its corresponding character.
	\item[*] In case $\eta \uparrow^G_H$ is irreducible, $\rho$, say, then the Scur index of $\rho$ is $m$.
	\item[*] In case $\eta \uparrow^G_H$ is equivalent to $p\rho$, where $\rho$ is the unique extension of $\eta$, then the Schur index of $\rho$ is $m/p$.
	\item[(B)] If $\lambda$ has two distinct $p$-th roots in $Z$, then $\eta$ extends to $p$ distinct irreducible $F$-representations of $G$, say, $\rho_{0}, \rho_{1}, \dots, \rho_{p-1}$. %Then $\eta \uparrow^{G}_{H} \cong \rho_{0} \oplus \rho_{1} \oplus \dots \oplus \rho_{p-1}.$ %Let $\tilde \rho$ be an irreducible $\bar{F}$-representation appears in $\eta \uparrow^G_H$, and $\chi_{\tilde \rho}$ be its corresponding character.
	\item[*] In this case, the Schur index of each $\rho_i$ is $m$.
	
	\item[(C)] If $\lambda$ has only one $p$-th root in $Z$, then  $\eta\uparrow^{G}_{H}$ decomposes into $1 + k$ distinct irreducible $F$-representations of $G$. Let $\rho_{0}, \rho_{1}, \dots, \rho_{k}$ be the $1 + k$ distinct irreducible $F$-representations of $G$ appear in $\eta\uparrow^{G}_{H}$. Recall that $\rho_{0}$ is an extension of $\eta$ and $\rho_{1}, \dots, \rho_{k}$ appear in $\eta\uparrow^{G}_{H}$ with multiplicity $s$.
	%Then $\eta \uparrow^{G}_{H} \cong \rho_{0} \oplus s(\rho_{1} \oplus \dots \oplus \rho_{k})$, where $\rho_{0}$ is unique extension of $\eta$ and $s$ divides  $d$, the common degree of irreducible factors of $\Phi_{p}(X)$ over $Z.$ Let $\tilde \rho$ be an irreducible $\bar{F}$-representation appears in $\eta \uparrow^G_H$, and $\chi_{\tilde \rho}$ be its corresponding character.
	\item[*] The Schur index of $\rho_0$ is $m$.
	\item[*] The Schur index of each $\rho_i, i =1,2 \dots, k$ is $m/s$, where $s$ is a divisor of g.c.d$(m, d)$.
	\item[*] We always have $dk = p-1$. So deg$~\rho_i = d/s$ deg$~\eta$, for $i = 1, 2, \dots, k$. In the frequently occurring case, $m = 1$, so $s = 1$, and so deg$~\rho_i = d$deg$~\eta$.
\end{enumerate}
\end{remark}
\section{Examples}\label{Examples}
In this section, we give some examples illustrating the cases in Theorem \ref{First Theorem} and in Theorem \ref{Second Theorem}. We shall use the notations as in the previous section.\par
\begin{example}\rm
%{\it{Example~$1$:}} 
Consider $G$ to be $Q_{8}$ with a prsentation:
$$G = \langle{x, y, z~|~ x^2 = 1, y^2 = x, z^2 = y^2, z^{-1}yz = xy}\rangle.$$
Take $H = C_{4} = \langle{x, y~|~ x^2 = 1, y^2 = x}\rangle$ and $F = \mathbb{Q}$. Let $\eta$ be the unique faithful irreducible $\mathbb{Q}$-representation of $H$ of degree $2$, and so $\eta^z \cong \eta$. We check that $e_{\eta} = 1 - e_x$, where $e_x = (1 + x)/2$ and $Z \cong \mathbb{Q}(i)$, $i = \sqrt{-1}$. In this case, $e_{\eta}$ does not split, and $\eta \uparrow^G_H$ is irreducible, say, $\rho$. Note that the simple components of $\eta$ and $\rho$ in their Wedderburn decompositions are $\mathbb{Q}(i)$ and $\mathbb{H}_{\mathbb{Q}}$ respectively, where $\mathbb{H}_\mathbb{Q}$, the quaternion algebra over $\mathbb{Q}$. So, the Schur index of $\rho$ is $2$. 
\par

Note that the irreducible $\mathbb{Q}$-representations of $G$ are of degrees $1, 1, 1, 1$ and $4$, whereas the degrees of irreducible ${\overline{\mathbb{Q}}}$-representations of $G$ are $1, 1, 1, 1$  and $2$. Let $V$ be an irreducible representation space of $G$ of dimension $4$ over $\mathbb{Q}$. Here $V \cong \mathbb{H}_{\mathbb{Q}}$ (as a $\mathbb{Q}$-vector space) has reduced dimension $1$, that is, it has dimension $1$ regarded as a right vector space over  $\mathbb{H}_{\mathbb{Q}}$ (as a division ring). As is well known, as rings, $\mathbb{H}_{\mathbb{Q}}\otimes {\overline {\mathbb{Q}}}$ is $M_2({\overline{\mathbb{Q}}})$, the ring of $2 \times 2$ matrices over ${\overline{\mathbb{Q}}}$. Each of the two columns of $M_{2}(\overline{\mathbb{Q}})$ can serve as an irreducible representation space of $G$ over $\overline{\mathbb{Q}}$. Notice that dim$(V \otimes_{{\mathbb{Q}}}{\overline{\mathbb{Q}}}) = 4$ also, and it  is a direct sum of $2$ copies of the same irreducible $2$-dimensional representation space of $G$ over $\overline{\mathbb{Q}}$.  
%So the ``Schur index", \'a la Schur (see \cite{MR1500832}), is also $2$. 
It was the genius of Schur to realise that  this indeed was a general phenomenon!
\end{example}

\begin{example}\rm    
%{\it{Example~$2$:}}
Consider $G$ to be $C_{7} \rtimes C_3$ with a presentation:
$$G = \langle{x, y~|~ x^7 = y^3 = 1, y^{-1}xy = x^2}\rangle.$$
Take $H = C_{7} = \langle{x~|~ x^7 = 1}\rangle$ and $F = \mathbb{Q}$. Let $\eta$ be the unique faithful irreducible $\mathbb{Q}$-representation of $H$ of degree $6$, and so $\eta^y \cong \eta$. We check that $e_{\eta} = 1 - e_x$, where $e_{x} = (1 + x + \dots + x^6) / 7$, and $Z \cong \mathbb{Q}(\zeta)$, where $\zeta$ is a primitive $7$-th root of unity. In this case, $e_{\eta}$ does not split in $\mathbb{Q}[G]$, and $\eta\uparrow^{G}_{H} \cong 3\rho$, where $\rho$ is an extension of $\eta$. Note that simple components corresponding to $\eta$, $\rho$  are $\mathbb{Q}(\zeta)$ and 
$M_{3}(\mathbb{Q}(\zeta + \zeta ^2 + \zeta^4))$ respectively.
\end{example}
\begin{example}\rm
%{\it Example~$3$:} 
Let $p$ be a prime. Consider $G$ to be $C_{p^{2}} = \langle{x, y~|~ x^p = 1, y^p = x}\rangle$. Take $H = C_{p} = \langle x~|~ x^p =1 \rangle$, and $F = \mathbb{Q}$.  Let $\eta$ be the unique faithful irreducible $\mathbb{Q}$-representation of $H$ of degree $p-1$. We check that $e_{\eta} = 1 - e_x$, where $e_{x} = (1 + x + \dots + x^{p-1}) / p$, and ${Z} \cong \mathbb{Q}(\zeta_p)$, where $\zeta_p$ is a primitive $p$-th root of unity. In this case, $e_{\eta}$ does not split in $\mathbb{Q}[G]$ and $\eta\uparrow^{G}_{H}$ is irreducible, say, $\rho$. Note that simple components corresponding to $\eta$, $\rho$ are $\mathbb{Q}(\zeta_p)$ and ${\mathbb{Q}(\zeta_{p^2})}$ respectively, where $\zeta_{p^2}$ is a primitive $p^2$-th root of unity.
\end{example}

\begin{example}\rm
%{\it Example~$4$:} 
Consider $G$ to be $Q_{8} \times_{C_2} C_{4}$ (central product) with a presentation:
$$\langle x,y,z, t~|~ x^{2} = 1, y^{2} = x, z^{2} = y^{2}, z^{-1}yz = xy, t^2 = x,\\ t^{-1}xt = x, t^{-1}yt = y, t^{-1}zt = z \rangle.$$ Take $H = Q_8$ and $F = \mathbb{Q}$. Take $\eta$ to be the unique faithful irreducible $\mathbb{Q}$-representation of $H$ of degree $4$. We check that $e_{\eta} = 1 - e_{x}$, where $e_x = (1 + x)/2$ and $Z \cong \mathbb{Q}$. In this case, $e_{\eta}$ does not split in $\mathbb{Q}[G]$, and $\eta \uparrow ^G_H \cong 2 \rho$, where $\rho$ is an extension of $\eta$. Note that simple components corresponding to $\eta$, $\rho$ are $\mathbb{H}_{\mathbb{Q}}$ and $M_2(\mathbb{Q}(i))$ respectively.
\end{example}
\begin{example}\rm
%{\it Example~$5$:}
Let $p$ be a prime. Consider $G$ to be $C_p \times C_p$ with presentation 
$$\langle{x, y~ |~ x^p = y^p = 1, xy = yx}\rangle.$$
Take $H = C_{p} = \langle{x | x^p = 1}\rangle$ and $F = \mathbb{Q}$. Let $\eta$ be the degree $p-1$ unique faithful irreducible $\mathbb{Q}$-representation of $H$. For an indeterminate $X$, let $e_{X} = (1 + X + \dots + X^{p-1})/p$. One can see that $e_{\eta} = 1 - e_{x}$ and ${Z} \cong \mathbb{Q}(\zeta_p)$, where $\zeta_p$ be a primitive $p$-th root of unity. In this case, $e_{\eta}$ splits into $p$ pci in $\mathbb{Q}[G]$. 
%Their expressions are: $e_{y}e^{'}_{x}, e_{xy}e^{'}_{x}, \cdots, e_{x^{p-1}y}e^{'}_{x}$. 
Correspondingly, $\eta\uparrow^{G}_{H}$ decomposes into $p$ distinct extensions of $\eta$ to $G$, say, $\rho_0, \rho_{1}, \dots, \rho_{p-1}$. Then $e_{\rho_{i}} = e_{x^i y}(1 - e_x), i = 0,1, \dots, p-1$. Note that the simple component corresponding to each $\rho_i$ in the Wedderburn decomposition is $\mathbb{Q}(\zeta_p)$. 
\end{example}

\begin{example}\rm 
%{\it Example~$6$:}
Consider $G$ to be $\mathrm{SL}_2(3)$ with a presentation:
\begin{align*}
\mathrm{SL}_2(3) = \langle x,y,z,t ~|~x^{2} = 1, y^{2} = x, z^{2} = y^{2}, z^{-1}yz = xy, t^{3} = 1, t^{-1}yt = z, t^{-1}zt = yz \rangle.
\end{align*}
Take $H = Q_{8}$ and $F = \mathbb{Q}$.
Let $\eta$ be the degree $4$ unique faithful irreducible $\mathbb{Q}$-representation of $H$. We can see that $e_{\eta} = 1 - e_x$, where $e_x = (1 + x)/2$, and ${Z} \cong \mathbb{Q}$. In this case, $e_{\eta}$ splits into two pci's in $\mathbb{Q}[G]$. Correspondingly, $\eta\uparrow^{G}_{H}$ decomposes into two distinct irreducible $\mathbb{Q}$-representations of $G$, and of degrees $4$, say, $\rho_0, \rho_1$. In fact, $\eta\uparrow^{G}_{H} \cong \rho_{0} \oplus 2 \rho_{1}.$ Note that the simple component corresponding to $\eta$ is $\mathbb{H}_{\mathbb{Q}}$, and the simple components corresponding to $\rho_0$, $\rho_1$ are $\mathbb{H}_{\mathbb{Q}}$, $\mathbb{H}_{\mathbb{Q}(\omega)} \cong M_{2}(\mathbb{Q}(\omega))$ respectively, where $\omega$ is a primitive cube root of unity.
\end{example}
{\bf Acknowledgments:}
The second author would like to thank to Rahul Dattatraya Kitture for useful discussions. He also wishes to express thanks to both the institutes Bhaskaracharya Pratishthana, Pune-India and Harish-Chandra Research Institute (HRI), Prayagraj (Allahabad)-India for giving all the facilities to complete this work.

\end{document}